\theoremstyle{definition}
\newtheorem*{ack}{Acknowledgements}
\newtheorem{defn}{Definition}[section]
\newtheorem{rmk}[defn]{Remark}
\theoremstyle{plain}
\newtheorem{cor}[defn]{Corollary}
\newtheorem{lem}[defn]{Lemma}
\newtheorem{prop}[defn]{Proposition}
\newtheorem{thm}[defn]{Theorem}
\numberwithin{equation}{section}
\numberwithin{figure}{section}
\numberwithin{table}{section}
\DeclareMathOperator{\id}{Id}
\newcommand{\set}[1]{\left\lbrace #1 \right\rbrace}
\newcommand{\setc}[2]{
  \left\lbrace #1 \, \middle\vert \, #2 \right\rbrace
  %\left\lbrace #1 \colon #2 \right\rbrace
}
\newcommand{\brac}[1]{\left( #1 \right)}
\newcommand{\comp}{\DOTSB\circ}
\newcommand{\vphi}{\varphi}
\renewcommand{\phi}{\vphi}
\newcommand{\eset}{\emptyset}
\newcommand{\numset}[1]{\mathbb{#1}}
\newcommand{\Z}{\numset{Z}}
\newcommand{\R}{\numset{R}}
\newcommand{\F}[1]{\numset{F}_{#1}}
\newcommand{\cycgrp}[1]{\Z / #1 \Z}
\newcommand{\torus}{\mathbb{T}}
\newcommand{\bound}{\partial}
\DeclareMathOperator{\Rect}{Rect}
\DeclareMathOperator{\Tri}{Tri}
\DeclareMathOperator{\Quad}{Quad}
\DeclareMathOperator{\Pent}{Pent}
\DeclareMathOperator{\Hex}{Hex}
\DeclareMathOperator{\Hept}{Hept}
\DeclareMathOperator{\Int}{Int}
\DeclareMathOperator{\Hom}{H}
\DeclareMathOperator{\Kh}{Kh}
\DeclareMathOperator{\HF}{HF}
\DeclareMathOperator{\HK}{GH} % Grid homology; tilde version
\DeclareMathOperator{\HFK}{HFK}
\DeclareMathOperator{\HFG}{GH} % Grid homology; general
\DeclareMathOperator{\GC}{GC} % Grid chain complex
\DeclareMathOperator{\sgn}{sgn}
\DeclareMathOperator{\RL}{RL}
\DeclareMathOperator{\TL}{TL}
\DeclareMathOperator{\CR}{CR}
\DeclareMathOperator{\BCR}{BCR}
\newcommand{\ophat}[1]{\widehat{#1}}
\newcommand{\optilde}[1]{\widetilde{#1}}
\newcommand{\emptypoly}[1]{#1^\circ}
\newcommand{\Kht}{\optilde{\Kh}}
\newcommand{\HFh}{\ophat{\HF}}
\newcommand{\HKh}{\ophat{\HK}}
\newcommand{\HKt}{\optilde{\HK}}
\newcommand{\HFKh}{\ophat{\HFK}}
\newcommand{\HFGh}{\ophat{\HFG}}
\newcommand{\HFGt}{\optilde{\HFG}}
\newcommand{\GCt}{\optilde{\GC}}
\newcommand{\eRect}{\emptypoly{\Rect}}
\newcommand{\ePent}{\emptypoly{\Pent}}
\newcommand{\eHex}{\emptypoly{\Hex}}
\newcommand{\eHept}{\emptypoly{\Hept}}
\newcommand{\gen}[1]{\mathbf{#1}}
\newcommand{\poly}[1]{\mathcal{#1}}
\newcommand{\markers}{\mathbb{X}}
\newcommand{\markersO}{\mathbb{O}}
\newcommand{\sign}{\mathcal{S}}
\newcommand{\quasi}{\mathcal{Q}}
\newcommand{\countJ}{\mathcal{J}}
\newcommand{\grid}{\mathbb{G}}
\title[Grid diagrams and Manolescu's unoriented skein exact triangle]{Grid 
  diagrams and Manolescu's unoriented skein exact triangle for knot Floer 
  homology}
\author{C.-M. Michael Wong}
\address{%
  Department of Mathematics, Columbia University, New York, NY 10027, USA
}
\email{\href{mailto:cmmwong@math.columbia.edu}{cmmwong@math.columbia.edu}}
\urladdr{\url{http://math.columbia.edu/~cmmwong/}}
\thanks{The author was partially supported by the Princeton University 
  Mathematics Department.}
\subjclass[2010]{57R58 (Primary); 57M25, 57M27 (Secondary)}
\begin{document}
%Document-Body------------------------------------------------------------------
%
%\layout
%
\begin{abstract}
  We re-derive Manolescu's unoriented skein exact triangle for knot Floer 
  homology over $\F{2}$ combinatorially using grid diagrams, and extend it to 
  the case with $\Z$ coefficients by sign refinements. Iteration of the 
  triangle gives a cube of resolutions that converges to the knot Floer 
  homology of an oriented link. Finally, we re-establish the homological 
  $\sigma$-thinness of quasi-alternating links.
\end{abstract}

\maketitle
\section{Introduction}

\emph{Heegaard Floer homology} is first introduced in \cite{OS04b} as an 
invariant for $3$-manifolds, defined using holomorphic disks and Heegaard 
diagrams. It is extended in \cite{OS04a, Ras03} to give an invariant, 
\emph{knot Floer homology}, for null-homologous knots in a closed, oriented 
$3$-manifold, which is further generalised in \cite{OS08} to the case of 
oriented links. Knot Floer homology comes in several flavours; its most usual 
form, $\HFKh (L)$ for an oriented link $L$, is a bi-graded module over $\F{2} = 
\Z / 2 \Z$ or $\Z$, whose Euler characteristic is the Alexander polynomial. For 
the purposes of this paper, we shall only consider links in $S^3$.

In \cite{MOS09, MOST07}, a combinatorial description of knot Floer homology 
over $\F{2}$ is given using \emph{grid diagrams}, which are certain 
multi-pointed Heegaard diagrams on the torus. In this approach, one can 
associate a chain complex $\GCt (\grid)$ to a grid diagram $\grid$, and calculate its 
homology $\HKt (\grid)$. Sign refinements for the boundary map $\bound$ are also 
given in \cite{MOST07} in a well-defined manner, allowing the chain complex to 
be defined over $\Z$. If $\grid$ is a grid diagram for a link $L$ of $\ell$ 
components, with \emph{grid number} $n$, then
\[
  \HKt (\grid) \cong \HFGh (L) \otimes V^{n-l},
\]
where $V$ is a free module of rank $2$ over the base ring $R = \F{2}$ or $\Z$, 
and $\HFGh (L)$ is a link invariant, called the \emph{combinatorial knot Floer 
homology} or the \emph{grid homology} of $L$. Over $\F{2}$, $\HFGh (L)$ is 
isomorphic to $\HFKh (L)$; over $\Z$, it has been shown in \cite{Sar11} that 
$\HFGh (L)$ is isomorphic to $\HFKh (L, \mathfrak{o})$ for some orientation 
system $\mathfrak{o}$.

Ozsv\'ath and Szab\'o observe in \cite{OS05} that, like Khovanov homology $\Kht 
(L)$ \cite{Kho00, Kho02, BN02}, Heegaard Floer homology of the branched double 
cover $\HFh (- \Sigma (L))$ satisfies an unoriented skein exact triangle.  
Manolescu then shows in \cite{Man07} that over $\F{2}$, the knot Floer homology 
also satisfies an unoriented skein exact triangle. More precisely, let 
$L_\infty$ be a link in $S^3$. Given a planar diagram of $L_\infty$, let $L_0$ 
and $L_1$ be the two resolutions of $L_\infty$ at a crossing in that diagram, 
as in Figure~\ref{fig:links}. Denote by $\ell_\infty, \ell_0, \ell_1$ the 
number of components of the links $L_\infty, L_0$, and $L_1$, respectively, and 
set $m = \max \set{\ell_\infty, \ell_0, \ell_1}$.

\begin{figure}
  \resizebox{0.75\textwidth}{!}{\input{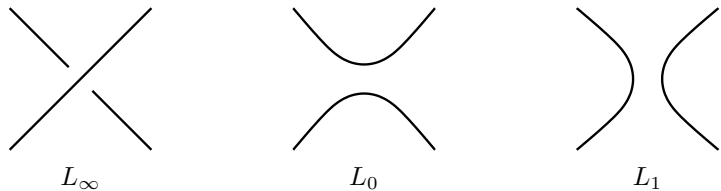}}
  \caption{$L_\infty, L_0$ and $L_1$ near a point.}
  \label{fig:links}
\end{figure}

\begin{thm}[Manolescu]
  \label{thm:manolescu}
  There exists an exact triangle
  \begin{gather*}
    \dotsb \to \HFKh (L_\infty; \F{2}) \otimes V^{m-\ell_\infty} \to \HFKh 
    (L_0; \F{2}) \otimes V^{m-\ell_0}\\
    \to \HFKh (L_1; \F{2}) \otimes V^{m-\ell_1} \to \dotsb,
  \end{gather*}
  where $V$ is a vector space of dimension $2$ over $\F{2}$.
\end{thm}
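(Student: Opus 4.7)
The plan is to find grid presentations $G_\infty$, $G_0$, $G_1$ for the three links that agree outside a small block surrounding the crossing, and then build three chain maps between the associated grid complexes whose mapping cones realize one another. The strategy parallels Ozsv\'ath--Szab\'o's original unoriented skein triangle for $\HFh(-\Sigma(L))$, and indeed Manolescu's own argument, except that all polygon counts now live on the torus rather than on a Heegaard surface for the branched double cover.

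The first step is to choose the local presentations so that $G_\infty$, $G_0$, $G_1$ differ only by the placement of two $\markers$-markers (say) inside a distinguished $2 \times 2$ square around the crossing. This way the three complexes share the same set of grid states, and $\eRect$-counts with support outside the square contribute identically to all three differentials. I would then define connecting maps $f_\infty\colon C(G_\infty) \to C(G_0)$, $f_0\colon C(G_0) \to C(G_1)$, and $f_1\colon C(G_1) \to C(G_\infty)$ by counts of empty pentagons $\ePent$ whose extra corner lies at one of the relocated markers, and verify the chain-map identity by pairing terms through the two ways an $\eRect$ and an $\ePent$ can share a corner in the distinguished square.

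Next I would show that each pairwise composition $f_0 \comp f_\infty$, $f_1 \comp f_0$, and $f_\infty \comp f_1$ is null-homotopic, with homotopies defined by $\eHex$-counts, by enumerating all juxtapositions of two empty pentagons or of an $\eRect$ and an $\eHex$ supported in the modified square. With the null-homotopies in hand, I would prove exactness by the standard device: form the iterated mapping cone of $f_\infty$ and $f_0$, and use an $\eHept$-counting quasi-isomorphism to show it is homotopy equivalent to $C(G_1)$, or equivalently that the cone of $f_\infty$ is quasi-isomorphic to $C(G_1)$. A general homological lemma then converts this into a long exact triangle on homology.

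Finally, since the grid numbers $n_\infty, n_0, n_1$ of the three diagrams may differ, the identification $\HKt(G_i) \cong \HFGh(L_i) \otimes V^{n_i - \ell_i}$ recalled from the introduction forces one to tensor the complexes with extra copies of $V$ until the exponents align with the common value $m$; over $\F{2}$ the identification $\HFGh \cong \HFKh$ then delivers the stated triangle. The main obstacle is expected to be the exactness step: classifying the $\eHept$ domains that connect generators of complexes three (or four) links apart and verifying that the cancellations in the iterated mapping cone exactly recover the missing complex is where the combinatorial weight of the argument concentrates.
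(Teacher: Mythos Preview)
Your high-level plan matches the paper's: build three grid diagrams differing only near the crossing, define polygon-counting maps between the associated complexes, and feed them into the standard homological-algebra lemma via null-homotopies and a heptagon-based quasi-isomorphism. Two points, however, need correction.

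First, the chain maps $f_k$ cannot be defined by pentagon counts alone. In the paper's setup the three curves $\beta_0,\beta_1,\beta_2$ pairwise intersect in \emph{two} points each, labelled $u_k$ and $v_k$; the map $f_k$ is a sum $\poly{P}_k + \poly{T}_k$, where $\poly{P}_k$ counts pentagons through $u_k$ and $\poly{T}_k$ counts small \emph{triangles} through $v_k$. Likewise the homotopy $\phi_k$ is $\poly{H}_k + \poly{Q}_k$, with $\poly{Q}_k$ counting quadrilaterals. These triangle-like pieces are not optional: in the verification that $f_{k+1}\comp f_k$ is null-homotopic, there is a composite domain in $\poly{P}_{k+1}\comp\poly{P}_k$ (one containing the thin annulus $b$) whose only partner is a domain in $\poly{T}_{k+1}\comp\poly{T}_k$. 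If you omit the triangles the cancellation fails and $f_{k+1}\comp f_k$ is not null-homotopic. So your outline of ``pentagons, hexagons, heptagons'' needs to be augmented to ``pentagons $+$ triangles, hexagons $+$ quadrilaterals, heptagons.''

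Second, the grid numbers do \emph{not} differ: the three diagrams live on the same torus with the same $n$, and only the positions of a few markers change. The factors $V^{m-\ell_k}$ in the statement arise because $\HKt(G_k)\cong\HFKh(L_k)\otimes V^{n-\ell_k}$ with a common $n$, so after factoring out $V^{n-m}$ one is left with $V^{m-\ell_k}$. No extra tensoring step is needed; the discrepancy is entirely in the component counts $\ell_k$.
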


\begin{rmk}
  The arrows in the exact triangle point in the reverse direction from those in 
  \cite{Man07}; this is caused by a difference in the orientation convention.  
  We follow the convention in \cite{OS04a} and \cite{MOS09, MOST07}, where the 
  Heegaard surface is the oriented boundary of the handlebody in which the 
  $\alpha$ curves bound discs.
\end{rmk}

Manolescu observes that the exact triangle above is different from that of 
$\Kht (L)$ and $\HFh (- \Sigma (L))$ in that it does not even respect the 
homological grading modulo $2$, and that it is unclear whether an analogous 
triangle holds for other versions of knot Floer homology. He also uses the 
exact triangle to show that $\operatorname{rk} \HFKh (L; \F{2}) = 2^{\ell-1} 
\det (L)$ for quasi-alternating links, which explains the fact that $\Kht$ and 
$\HFKh$ have equal ranks for many classes of knots.

The goal of the present paper is to re-prove Manolescu's theorem in elementary 
terms using grid diagrams, without appealing to the topological theory. The 
advantages of this approach are three-fold.

First, Manolescu's skein exact triangle is proven in \cite{Man07} to exist over 
$\F{2}$. By assigning signs to the maps between chain complexes, we can obtain 
an analogous exact triangle in combinatorial knot Floer homology with $\Z$ 
coefficients, which has not been known to exist before. In other words, we 
obtain the following statement.

\begin{thm}
  \label{thm:main}
  For sufficiently large $n$, there exists an 
  exact triangle
  \[
    \dotsb \to \HFGh (L_\infty; R) \otimes V^{n-\ell_\infty} \to \HFGh (L_0; R) 
    \otimes V^{n-\ell_0} \to \HFGh (L_1; R) \otimes V^{n-\ell_1} \to \dotsb,
  \]
  where $R = \F{2}$ or $\Z$, and $V$ is a free module of rank $2$ over $R$.
\end{thm}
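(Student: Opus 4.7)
The plan is to work entirely at the level of grid chain complexes. I would begin by choosing a grid diagram $G_\infty$ for $L_\infty$ in which the crossing is represented by a $2 \times 2$ sub-square in the standard way, so that the two resolutions $L_0$ and $L_1$ are obtained by moving one marker in that square. Writing $G_0$ and $G_1$ for the resulting grid diagrams, the three chain complexes $C(G_\infty), C(G_0), C(G_1)$ then share a common generating set (the grid states of the underlying grid), and differ only in their boundary maps $\bound$, which count empty rectangles avoiding the respective markers. The target triangle is, up to factors of $V$ and the splitting $\HKt(G) \cong \HFGh(L) \otimes V^{n-\ell}$, a long exact sequence relating $\HKt(G_\infty), \HKt(G_0)$, and $\HKt(G_1)$.

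Next, I would define three maps
\[
  f_{\infty 0} : C(G_\infty) \to C(G_0), \quad f_{01} : C(G_0) \to C(G_1), \quad f_{1\infty} : C(G_1) \to C(G_\infty),
\]
by signed counts of empty pentagons supported in the $2 \times 2$ crossing window, in the spirit of the commutation and stabilisation maps of \cite{MOS09, MOST07}. Checking each is a chain map reduces to a local analysis of composite regions in that window, which decompose either as rectangle$+$pentagon or as pentagon$+$rectangle; the two decompositions should cancel in pairs exactly as in the standard proof that grid homology is invariant under commutations.

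To obtain the exact triangle, the key algebraic input is the acyclicity of the triple mapping cone of $f_{\infty 0}, f_{01}, f_{1\infty}$. I would first show that the three pairwise compositions $f_{01} \comp f_{\infty 0}$, $f_{1\infty} \comp f_{01}$, and $f_{\infty 0} \comp f_{1\infty}$ are null-homotopic via homotopies counting empty hexagons, and then assemble these into a null-homotopy of the identity on the triple cone via a higher chain homotopy counting empty heptagons. The verification reduces, in each case, to the combinatorial identity that for every domain in the crossing window the signed sum over all rectangle/pentagon/hexagon/heptagon decompositions vanishes. This yields the desired long exact sequence at the level of $\HKt$, and tensoring down by the $V^{n-\ell}$ factors gives the stated triangle in $\HFGh$.

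The main obstacle is the sign refinement over $\Z$. Over $\F{2}$ the polygon cancellations above are proved simply by pairing decompositions, but over $\Z$ one must extend the sign assignment $\sign$ on rectangles from \cite{MOST07} to pentagons, hexagons, and heptagons, and verify that for every domain admitting two different polygon decompositions the two contributions carry opposite signs. I would follow the strategy of \cite{MOST07}: axiomatise the required sign behaviour on the enlarged polygon set, prove existence and uniqueness up to gauge of such an extended $\sign$ by a cohomological argument on an appropriate $1$-complex of polygon moves, and then check that the resulting signed counts make $f_{\infty 0}, f_{01}, f_{1\infty}$ chain maps, produce null-homotopies of the pairwise compositions, and provide a null-homotopy of the triple cone. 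The existence of a single consistent sign assignment satisfying all of these compatibility conditions on a large and combinatorially intricate family of polygons is the most delicate step.
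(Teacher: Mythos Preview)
Your overall architecture matches the paper: define polygon-counting maps $f_k$, show $f_{k+1}\comp f_k$ is null-homotopic via higher polygons, and then produce a further homotopy witnessing that $\phi_{k+1}\comp f_k + f_{k+2}\comp\phi_k$ is homotopic to the identity, invoking the standard homological-algebra lemma from \cite{OS05}. That is exactly what the paper does.

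However, two concrete points in your sketch are genuinely off. First, the maps $f_k$ are \emph{not} given by pentagons alone, and the polygons are \emph{not} supported in the crossing window. In the paper's setup the three diagrams are drawn on a common torus with three distinct (isotopic) vertical circles $\beta_0,\beta_1,\beta_2$ near the crossing; these intersect pairwise in two points $u_k,v_k$. The chain map $f_k$ counts pentagons with a corner at $u_k$ \emph{together with} small triangles with a corner at $v_k$; likewise $\phi_k$ counts hexagons through $u_k,u_{k+1}$ \emph{and} quadrilaterals through $v_k,u_{k+1}$. The pentagons and hexagons are global objects (only one vertex is pinned near the crossing), so the cancellation analysis is not purely local. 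The triangle and quadrilateral pieces are not optional: without them the key identity $f_{k+1}\comp f_k + \bound\comp\phi_k + \phi_k\comp\bound = 0$ fails, because certain composite domains (e.g.\ a pentagon followed by a small triangle, or two small triangles) have no alternate decomposition of the same type and must instead be paired with a domain differing by swapping one of the tiny triangles $t_j$ for another, or for the thin annulus $b$. These ``special cases'' are the heart of the verification and are invisible if one only writes down pentagons.

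Second, on signs: the paper does \emph{not} axiomatise an enlarged sign assignment and prove existence cohomologically. Instead it gives explicit formulas. For each rectangle-like polygon (pentagon, hexagon, heptagon) one defines a \emph{straightening map} to an honest empty rectangle by sliding the $\beta$-corner(s) along a horizontal arc to the corresponding generator, and pulls back the existing rectangle sign $\sign$ from \cite{MOST07}; a left/right correction is inserted for pentagons. For each triangle-like polygon (triangle, quadrilateral) the sign is simply $\sgn(\gen{x})$, the parity of the source generator viewed as a permutation. With these explicit signs, every paired decomposition in the $\F{2}$ argument is checked to carry opposite sign, case by case (Type~I--IV domains, plus the special cases). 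Your proposed axiomatic route might in principle succeed, but it is substantially more work than necessary and you would still have to confront the triangle/quadrilateral terms, whose sign behaviour (governed by permutation parity rather than rectangle signs) does not fit naturally into a uniform ``extended $\sign$'' framework.
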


Second, the exact triangle is iterated by Baldwin and Levine in \cite{BL12} to 
obtain a cube of resolutions; when using twisted coefficients, this gives a 
combinatorial description of knot Floer homology, distinct from that provided 
by grid diagrams. In the present context, knowing explicitly the maps between 
the chain complexes associated to the grid diagrams, we can likewise iterate 
the exact triangle to get a cube of resolutions complex $\CR (\grid)$ over 
$\F{2}$, with untwisted coefficients. The higher terms in the resulting 
spectral sequence are combinatorially computable.

\begin{cor}
  \label{cor:main}
  The cube of resolutions $\CR (\grid; \F{2})$, which has no diagonal maps, gives rise to a spectral 
  sequence that converges to $\HFGh (L; \F{2}) \otimes V^{m - \ell}$.
\end{cor}

It should be noted here that the spectral sequence is presumably not a knot 
invariant; see Remark~7.7 of \cite{BL12}.

The technique of spectral sequences is first used by Ozsv\'ath and Szab\'o in 
\cite{OS05}, where a spectral sequence from $\Kht (L)$ to $\HFh (- \Sigma (L))$ 
is shown to exist. More recently, Lipshitz, Ozsv\'ath and Thurston have found a 
way in \cite{LOT14} to compute the higher terms in this spectral sequence using 
bordered Floer homology. Inspired by this work, Baldwin has found another 
method in \cite{Bal11} of computing these higher terms.

Third, there exists a \emph{$\delta$-grading} on $\HFKh$; in \cite{MO08}, 
Manolescu and Ozsv\'{a}th investigate the $\delta$-grading changes in the skein 
exact triangle \cite[Proposition~3.9]{MO08}. This result allows them to apply 
the skein exact triangle to \emph{quasi-alternating links}, to show that such 
links are \emph{Floer-homologically $\sigma$-thin} over $\F{2}$.  The 
$\delta$-gradings can also be determined in the combinatorial picture; doing 
so, we prove a generalisation of the statement of Floer-homological 
$\sigma$-thinness to $\Z$.

\begin{thm}
  \label{thm:delta_sigma}
  Suppose that $\det (L_0), \det (L_1) > 0$ and $\det (L_\infty) = \det (L_0) + 
  \det (L_1)$. Then with respect to the $\delta$-grading, the exact sequence in 
  Theorem~\ref{thm:main} can be written as
  \begin{align*}
    \dotsb & \to \HFGh_{*-\frac{\sigma(L_1)}{2}} (L_1; R) \otimes V^{n-\ell_1} 
    \to \HFGh_{*-\frac{\sigma(L_\infty)}{2}} (L_\infty; R) \otimes 
    V^{n-\ell_\infty}\\
    & \to \HFGh_{*-\frac{\sigma(L_0)}{2}}  (L_0; R) \otimes V^{n-\ell_0} \to 
    \HFGh_{*-\frac{\sigma(L_1)}{2}+1} (L_1; R) \otimes V^{n-\ell_1} \to \dotsb,
  \end{align*}
  where $R = \F{2}$ or $\Z$, and $V$ is a free module of rank $2$ over $R$ with 
  grading zero.
\end{thm}

\begin{thm}
  \label{thm:thinZ}
  Quasi-alternating links are Floer-homologically $\sigma$-thin over $\Z$.
\end{thm}

This paper is organised as follows. We review the definition of knot Floer 
homology in terms of grid diagrams in Section~\ref{sec:grid}, and re-prove the 
skein exact triangle in Section~\ref{sec:skein}. In these two sections, we will 
work only over $\F{2}$. Sign-refinements are then given in 
Section~\ref{sec:signs} to establish the analogous result over $\Z$. Next, we 
discuss how the exact triangle can be iterated to obtain a cube of resolutions 
over $\F{2}$ in Section~\ref{sec:iterate}. Finally, we establish the 
homological $\sigma$-thinness of quasi-alternating links over $\F{2}$ and $\Z$ 
in Section~\ref{sec:quasi}.

\begin{ack}
  The author is very grateful to John Baldwin for his suggestion of the topic 
  of the paper, and is indebted to John for many essential ideas in the text.  
  He also thanks John Baldwin, Robert Lipshitz, and Peter Ozsv\'ath for their 
  guidance. He thanks Ciprian Manolescu for a helpful conversation, and Gahye 
  Jeong for pointing out a previously missing case in the proof of 
  Lemma~\ref{lem:cond_1}. Lastly, he thanks the referee for remarkably thorough 
  and useful comments, and for pointing out a mistake in the statement of the 
  main theorem (Theorem~\ref{thm:main}) in an earlier version.
\end{ack}

\section{Grid diagrams}
\label{sec:grid}

We review the combinatorial description of knot Floer homology in terms of grid 
diagrams. In this and the next section, we will work only over $\F{2} = \Z / 2 
\Z$.

A \emph{planar grid diagram} $\Tilde{\grid}$ with \emph{grid number} $n$ is a 
square grid in $\R^2$ with $n \times n$ cells, together with a collection of 
$O$'s and $X$'s, such that
\begin{itemize}
  \item Each row contains exactly one $O$ and exactly one $X$;
  \item Each column contains exactly one $O$ and exactly one $X$; and
  \item Each cell is either empty, contains one $O$, or contains one $X$.
\end{itemize}
Given a planar grid diagram $\Tilde{\grid}$, we can place it in a standard 
position on $\R^2$ as follows: We place the bottom left corner at the origin, 
and require that each cell be a square of edge length one. We can then 
construct an oriented, planar link projection by drawing horizontal segments 
from the $O$'s to the $X$'s in each row, and vertical segments from the $X$'s 
to the $O$'s in each column. At every intersection point, we let the horizontal 
segment be the underpass and the vertical one the overpass. This gives a planar 
projection of an oriented link $L$ onto $\R^2$; we say that $\Tilde{\grid}$ is 
a \emph{planar grid presentation} of $L$.

We transfer our grid diagram to the torus $\torus$, by gluing the topmost 
segment to the bottommost one, and gluing the leftmost segment to the rightmost 
one. Then the horizontal and vertical arcs become horizontal and vertical 
circles. The torus inherits its orientation from the plane. The resulting 
diagram $\grid$ is a \emph{toroidal grid digram}, or simply a \emph{grid 
  diagram}. $\grid$ is then a \emph{grid presentation} of $L$; we also say that 
$\grid$ is a grid digram for $L$.

Given a toroidal grid diagram $\grid$, we associate to it a chain complex 
$(\GCt (\grid), \bound)$ as follows. The set of generators of $\GCt (\grid)$, 
denoted $\gen{S} (\grid)$, consists of one-to-one correspondences between the 
horizontal circles and vertical circles. Equivalently, we can regard the 
generators as $n$-tuples of intersection points between the horizontal and 
vertical circles, such that no intersection point appears on more than one 
horizontal or vertical circle.

We now define the differential map $\bound \colon \GCt (\grid) \to \GCt 
(\grid)$. Given $\gen{x}, \gen{y} \in \gen{S} (\grid)$, let $\Rect (\gen{x}, 
\gen{y})$ denote the space of embedded rectangles with the following 
properties. First of all, $\Rect (\gen{x}, \gen{y})$ is empty unless $\gen{x}, 
\gen{y}$ coincide at exactly $n - 2$ points. An element $r$ of $\Rect (\gen{x}, 
\gen{y})$ is an embedded disk in $\torus$, whose boundary consists of four 
arcs, each contained in horizontal or vertical circles; under the orientation 
induced on the boundary of $r$, the horizontal arcs are oriented from a point 
in $\gen{x}$ to a point in $\gen{y}$. The space of empty rectangles $r \in 
\Rect (\gen{x}, \gen{y})$ with $\gen{x} \cap \Int (r) = \eset$ is denoted 
$\eRect (\gen{x}, \gen{y})$.

More generally, a \emph{path} from $\gen{x}$ to $\gen{y}$ is a $1$-cycle 
$\gamma$ on $\torus$ such that the boundary of the intersection of $\gamma$ 
with the union of the horizontal curves is $\gen{y} - \gen{x}$, and a 
\emph{domain} $p$ from $\gen{x}$ to $\gen{y}$ is a two-chain in $\torus$ whose 
boundary $\bound p$ is a path from $\gen{x}$ to $\gen{y}$; the set of domains 
from $\gen{x}$ to $\gen{y}$ is denoted $\pi (\gen{x}, \gen{y})$.

Given $\gen{x} \in \gen{S} (\grid)$, we define
\[
  \bound (\gen{x}) = \sum_{\gen{y} \in \gen{S} (\grid)} \sum_{r \in \eRect 
    (\gen{x}, \gen{y})}
  \begin{cases}
    1 & \text{if $\Int (r)$ contains no $O$'s or $X$'s}\\
    0 & \text{otherwise}
  \end{cases}
  \Bigg\} \cdot \gen{y} \in \GCt (\grid).
\]

It is not too difficult to see that indeed $\bound \comp \bound = 0$, and so 
$\bound$ is a differential: We have
\[
  \bound \comp \bound (\gen{x}) = \sum_{\gen{y} \in \gen{S} (\grid)} \sum_{p 
    \in \pi (\gen{x}, \gen{z})} N (p) \cdot \gen{z},
\]
where $N (p)$ is the number of ways of decomposing $p$ as a composite of two 
empty rectangles $p = r_1 * r_2$, where $r_1 \in \eRect (\gen{x}, \gen{y})$ and 
$r_2 \in \eRect (\gen{y}, \gen{z})$. Let $p = r_1 * r_2$; then $r_1$ and $r_2$ 
either are disjoint, have overlapping interiors, or share a corner. If $r_1$ 
and $r_2$ are disjoint or have overlapping interiors, then $p = r_2 * r_1$; if 
they share a corner, then there exists a unique alternate decomposition of $p = 
r_1' * r_2'$. In any case, we obtain that $N (p) = 0$ for all $p \in \pi 
(\gen{x}, \gen{z})$.

Moreover, to the complex $\GCt (\grid)$ we can associate a \emph{Maslov 
  grading} and an \emph{Alexander grading}, determined by the functions $M 
\colon \gen{S} \to \Z$ and $S \colon \gen{S} \to \frac{1}{2} \Z$. For reasons 
we will see in the next section, we will in general not be concerned with these 
gradings, unless otherwise specified. We postpone their definitions to 
Section~\ref{sec:quasi}.  It can be checked, however, that the differential 
$\bound$ decreases the Maslov grading by $1$ and preserves the Alexander 
grading.

We can now take the homology of the chain complex $(\GCt (\grid), \bound)$, and 
define
\[
  \HKt (\grid) = \Hom_* (\GCt (\grid), \bound).
\]
It is shown in \cite{MOS09, MOST07} that, if $\grid$ is a grid diagram with 
grid number $n$ for the oriented link $L$ with $\ell$ components, then
\[
  \HKt (\grid) \cong \HKh (L) \otimes V^{n - \ell},
\]
where $V$ is a $2$-dimensional vector space over $\F{2}$, spanned by one 
generator in (Maslov and Alexander) bigrading $(-1, -1)$ and another in 
bigrading $(0, 0)$, and $\HKh (L)$ is a link invariant, often referred to as 
the \emph{combinatorial knot Floer homology}, or \emph{grid homology}, that is 
a vector space isomorphic to $\HFKh (L)$. $\HKt (\grid)$ can also be denoted by 
$\HFGt (L, n)$.

\begin{rmk}
  While the proof that $\bound$ is a differential is completely elementary, its 
  method is very useful to what we shall prove in this paper. In general, in 
  order to prove that a map defined by counting certain domains is a chain map, 
  or to prove that it is a chain homotopy, we juxtapose two domains and 
  enumerate all possible outcomes.
\end{rmk}

\section{Manolescu's unoriented skein exact triangle}
\label{sec:skein}

We now prove our main result over $\F{2}$ in purely combinatorial terms.

Before we start our main discussion, we make a change in our notation. In the 
original description developed in \cite{MOS09, MOST07}, the $O$'s and $X$'s are 
used to determine the Alexander and Maslov gradings of the generators. However, 
given a link $L_\infty$ in $S^3$ and its two resolutions $L_0, L_1$ at a 
crossing, we observe that $L_\infty, L_0, L_1$ do not share a compatible 
orientation. Since we shall soon combine all three grid diagrams into one, we 
must forget the orientations of the links; this implies that we must also 
forget the distinctions between the $O$'s and $X$'s, and ignore the gradings. 
Notice that using only one type of markers will not change the definition of 
$\HFGt (L, n)$, since without the gradings, the definition of the chain complex 
$\GCt (\grid)$ associated to a grid diagram $G$ is symmetric in the $O$'s and 
the $X$'s. Therefore, we shall henceforth replace all $O$'s with $X$'s. 

With this new notation, the first two conditions for a grid diagram become the 
condition that there are exactly two $X$'s on each row and each column. We 
denote by $\markers$ the set of $X$'s on a grid diagram. The differential is 
then given by
\[
  \bound (\gen{x}) = \sum_{\gen{y} \in \gen{S} (\grid)} \sum_{\substack{r \in 
      \Rect^\circ (\gen{x}, \gen{y})\\ \Int (r) \cap \markers = \eset}} \gen{y} 
  \in \GCt (\grid).
\]

\begin{figure}
  \resizebox{0.8\textwidth}{!}{\input{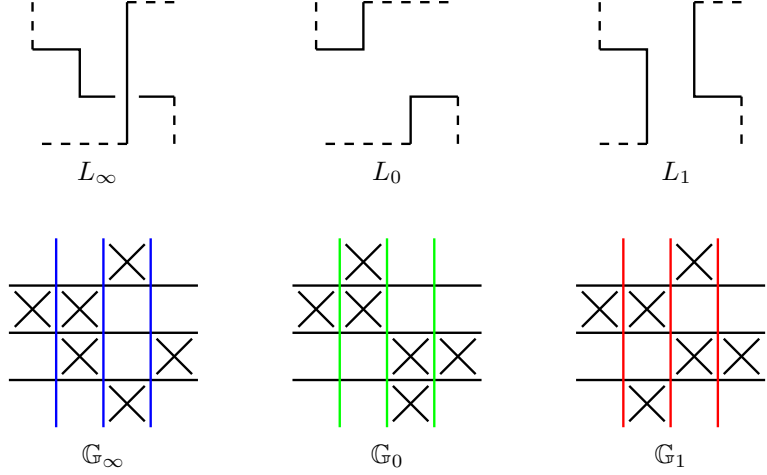}}
  \caption{Grid diagrams for $L_\infty, L_0$ and $L_1$ near a point.}
  \label{fig:grid}
\end{figure}

To begin, we position $L_\infty, L_0, L_1$ as in Figure~\ref{fig:grid}, and 
make sure that their respective grid diagrams $\grid_{\infty}, \grid_0, 
\grid_1$ are identical except near the crossing, as indicated in the same 
figure. Next, we let $C_k = \GCt (\grid_k)$ be the chain complex associated 
with $\grid_k$, for each $k \in \set{\infty, 0, 1}$. We endow the set 
$\set{\infty, 0, 1}$ with an action by $\cycgrp{3}$ by identifying $\infty$ 
with $2$, so that $\infty + 1 = 0$ and $1 + 1 = \infty$.

\begin{rmk}
  The leftmost and rightmost columns in each of the diagrams in 
  Figure~\ref{fig:grid} do not need to be in the form displayed; the markers 
  can be in any row in those two columns, and the proofs in this and the next 
  section do not rely on the positions of the markers. In the figures in this 
  section, we leave the markers there for ease of visualisation. However, in 
  Section~\ref{sec:iterate}, it will be necessary to have the markers exactly 
  as they appear in Figure~\ref{fig:grid}, to iterate the exact triangle.
\end{rmk}

Instead of drawing three different diagrams, we can draw $\grid_{\infty}, 
\grid_0, \grid_1$ all on the same diagram, as in Figure~\ref{fig:one_grid}. We 
label by $\beta_k$ the vertical circle corresponding to $\grid_k$, for each $k 
\in \set{\infty, 0, 1}$, as indicated. These three vertical circles, together 
with all the other vertical circles, divide $\torus$ into a number of 
components; we let $b$ be the unique component that is an annulus not 
containing any $X$ in its interior.  Also, exactly three of these components 
are embedded triangles not containing any $X$ in their interior; we let $t_k$ 
be the triangle whose $\beta_k$ arc is disjoint from the boundary of $b$.  
Finally, $\beta_k$ and $\beta_{k+1}$ intersect at exactly two points; we denote 
by $u_k$ the intersection point that lies on the boundary of $b$, and  by $v_k$ 
the other intersection point. Then $u_k = b \cap t_{k+2}$ and $v_k = t_k \cap 
t_{k+1}$.

\begin{figure}
  \resizebox{0.33\textwidth}{!}{\input{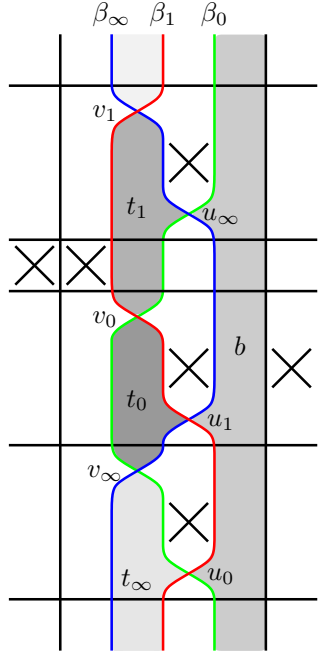}}
  \caption{Combined grid diagram for $L_\infty, L_0$ and $L_1$ near a point. 
    The annulus $b$ and the triangles $t_k$ are shaded. The circles $\beta_k$ 
    and $\beta_{k+1}$ intersect at two points; $u_k$ are the ones on the right, 
    and $v_k$ are those on the left.}
  \label{fig:one_grid}
\end{figure}

In this setting and over $\F{2}$, Theorem~\ref{thm:main} follows from our main 
proposition:

\begin{prop}
  \label{prop:exact}
  There exists an exact triangle
  \[
    \dotsb \to \HKt (\grid_{\infty}; \F{2}) \to \HKt (\grid_0; \F{2}) \to \HKt 
    (\grid_1; \F{2}) \to \dotsb
  \]
\end{prop}

We make use of the following lemma from homological algebra \cite{OS05}, used 
also in \cite{Man07}:

\begin{lem}
  \label{lem:hom_alg}
  Let $\set{(C_k, \bound_k)}_{k \in \set{\infty, 0, 1}}$ be a collection of 
  chain complexes over an arbitrary commutative ring, and let $\set{f_k \colon 
    C_k \to C_{k+1}}_{k \in \set{\infty, 0, 1}}$ be a collection of anti-chain 
  maps such that the following conditions are satisfied for each $k$:
  \begin{enumerate}
    \item The composite $f_{k+1} \comp f_k \colon C_k \to C_{k+2}$ is 
      chain-homotopic to zero, by a chain homotopy $\phi_k \colon C_k \to 
      C_{k+2}$:
      \[
        f_{k+1} \comp f_k + \bound_{k+2} \comp \phi_k + \phi_k \comp \bound_k = 
        0;
      \]
    \item The map $\phi_{k+1} \comp f_k + f_{k+2} \comp \phi_k \colon C_k \to 
      C_k$ is a quasi-isomorphism. (In particular, if there exists a chain 
      homotopy $\psi_k \colon C_k \to C_k$ such that
      \[
        \phi_{k+1} \comp f_k + f_{k+2} \comp \phi_k + \bound_k \comp \psi_k + 
        \psi_k \comp \bound_k = \id,
      \]
      then this condition is satisfied.)
  \end{enumerate}
  Then the sequence
  \[
    \dotsb \to \Hom_* (C_k) \xrightarrow{(f_k)_*} \Hom_* (C_{k+1}) 
    \xrightarrow{(f_{k+1})_*} \Hom_* (C_{k+2}) \to \dotsb
  \]
  is exact.
\end{lem}

We now define the chain maps $f_k \colon C_k \to C_{k+1}$ by counting pentagons 
and triangles.

Given $\gen{x} \in \gen{S} (\grid_k)$ and $\gen{y} \in \gen{S} (\grid_{k+1})$, 
let $\Pent_k (\gen{x}, \gen{y})$ denote the space of embedded pentagons with 
the following properties. First of all, $\Pent_k (\gen{x}, \gen{y})$ is empty 
unless $\gen{x}, \gen{y}$ coincide at exactly $n - 2$ points. An element $p$ of 
$\Pent_k (\gen{x}, \gen{y})$ is an embedded disk in $\torus$, whose boundary 
consists of five arcs, each contained in horizontal or vertical circles; under 
the orientation induced on the boundary of $p$, we start at the 
$\beta_k$-component of $\gen{x}$, traverse the arc of a horizontal circle, meet 
its corresponding component of $\gen{y}$, proceed to an arc of a vertical 
circle, meet the corresponding component of $\gen{x}$, continue through another 
horizontal circle, meet the component of $\gen{y}$ contained in $\beta_{k+1}$, 
proceed to an arc in $\beta_{k+1}$, meet the intersection point $u_k$ of 
$\beta_k$ and $\beta_{k+1}$, and finally, traverse an arc in $\beta_k$ until we 
arrive back at the initial component of $\gen{x}$. Notice that all angles here 
are at most straight angles. The space of empty pentagons $p \in \Pent_k 
(\gen{x}, \gen{y})$ with $\gen{x} \cap \Int (p) = \eset$ is denoted $\ePent_k 
(\gen{x}, \gen{y})$.

Similarly, we let $\Tri_k (\gen{x}, \gen{y})$ denote the space of embedded 
triangles with the following properties. $\Tri_k (\gen{x}, \gen{y})$ is empty 
unless $\gen{x}, \gen{y}$ coincide at exactly $n - 1$ points. An element $p$ of 
$\Tri_k (\gen{x}, \gen{y})$ is an embedded disk in $\torus$, whose boundary 
consists of three arcs, each contained in horizontal or vertical circles; under 
the orientation induced on the boundary of $p$, we start at the 
$\beta_k$-component of $\gen{x}$, traverse the arc of a horizontal circle, meet 
the component of $\gen{y}$ contained in $\beta_{k+1}$, proceed to an arc in 
$\beta_{k+1}$, meet the intersection point $v_k$ of $\beta_k$ and 
$\beta_{k+1}$, and finally traverse an arc in $\beta_k$ to return to the 
initial component of $\gen{x}$. Again, all the angles here are less than 
straight angles. It is clear that all triangles $p \in \Tri_k (\gen{x}, 
\gen{y})$ satisfy $\gen{x} \cap \Int (p) = \eset$. Furthermore, for any 
generator $\gen{x}$, there is at most one generator $\gen{y}$ such that $\Tri_k 
(\gen{x}, \gen{y})$ is not empty.

\begin{figure}
  \includegraphics[width=\textwidth]{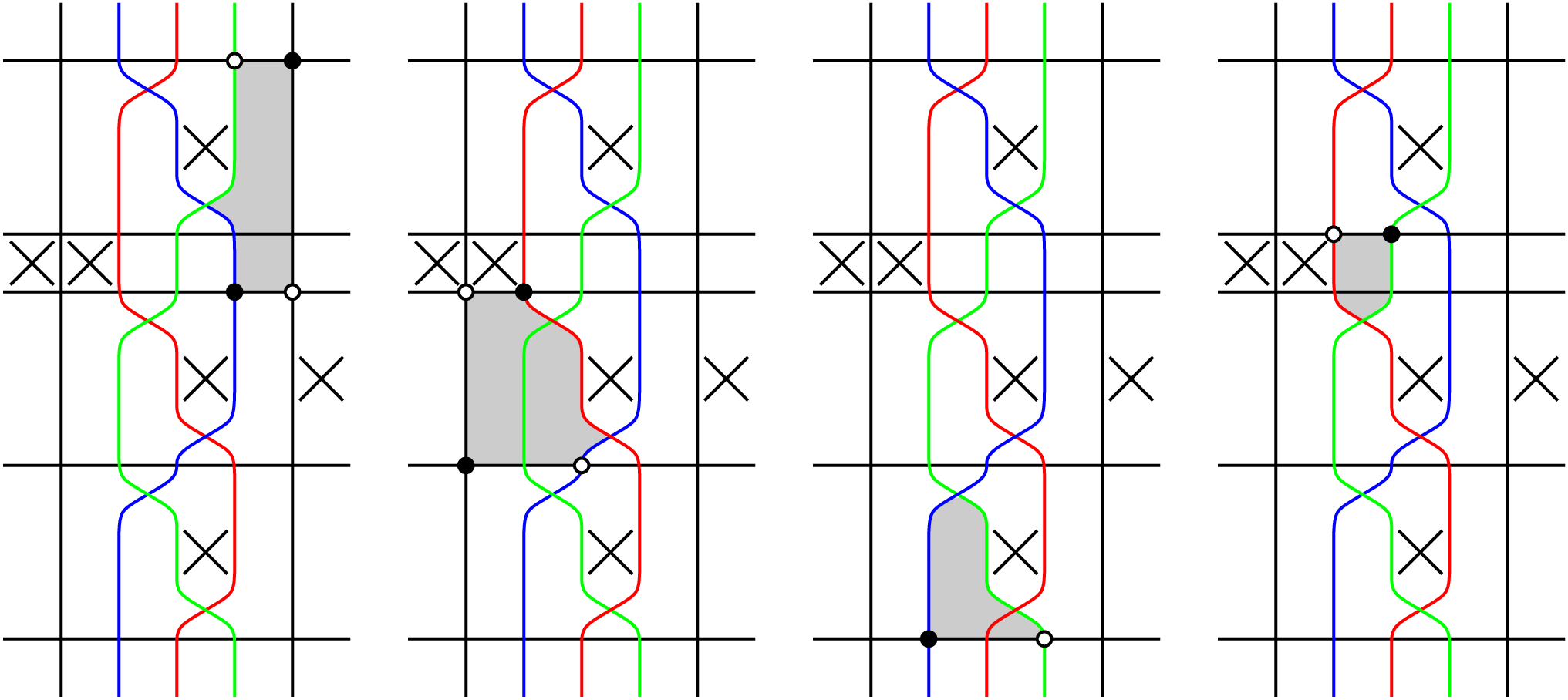}
  \caption{Two allowed pentagons in $\ePent_k (\gen{x}, \gen{y})$ and two 
    allowed triangles in $\Tri_k (\gen{x}, \gen{y})$. The components of 
    $\gen{x}$ are indicated by solid points, and those of $\gen{y}$ are 
    indicated by hollow ones.}
  \label{fig:pent_tri}
\end{figure}

Given $\gen{x} \in \gen{S} (\grid_k)$, we define
\begin{align*}
  \poly{P}_k (\gen{x}) = \sum_{\gen{y} \in \gen{S} (\grid_{k+1})} 
  \sum_{\substack{p \in \ePent_k (\gen{x}, \gen{y})\\ \Int (p) \cap \markers = 
      \eset}} \gen{y} \in C_{k+1},\\
  \poly{T}_k (\gen{x}) = \sum_{\gen{y} \in \gen{S} (\grid_{k+1})} 
  \sum_{\substack{p \in \Tri_k (\gen{x}, \gen{y})\\ \Int (p) \cap \markers = 
      \eset}} \gen{y} \in C_{k+1},
\end{align*}
and
\[
  f_k (\gen{x}) = \poly{P}_k (\gen{x}) + \poly{T}_k (\gen{x}) \in C_{k+1}.
\]

\begin{lem}
  \label{lem:chain}
  The map $f_k$ is a chain map. In fact, $\poly{P}_k$ and $\poly{T}_k$ are both 
  chain maps.
\end{lem}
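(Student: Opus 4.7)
The plan is to prove the stronger statement that $\poly{P}_k$ and $\poly{T}_k$ are individually chain maps; since $f_k = \poly{P}_k + \poly{T}_k$ and the differentials are linear, this immediately gives that $f_k$ is a chain map. Thus it suffices to establish the two identities $\bound_{k+1} \comp \poly{P}_k + \poly{P}_k \comp \bound_k = 0$ and $\bound_{k+1} \comp \poly{T}_k + \poly{T}_k \comp \bound_k = 0$, which I will do by the juxtaposition technique advertised at the end of Section~\ref{sec:grid}: expand each composition, group contributions by the underlying composite domain, and show every such domain admits an even number of decompositions.

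For $\poly{T}_k$, I expand both sides applied to $\gen{x}$ as a sum over pairs $(r, t)$ where $t$ is an empty triangle and $r$ is an empty rectangle, composed in one of the two possible orders (rectangle in $G_k$ first, or rectangle in $G_{k+1}$ second). For each resulting composite domain $D \in \pi (\gen{x}, \gen{z})$ with $\Int (D) \cap \markers = \eset$, I classify the decompositions according to the relative position of $r$ and $t$: if they are disjoint, or have overlapping interiors in the generic way, then the standard cut-and-paste swap (exactly as in the argument that $\bound \comp \bound = 0$) produces a unique alternate decomposition; if they share a corner at an ordinary lattice vertex, the same swap produces a second decomposition through a different intermediate generator. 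The case that requires extra attention is when $r$ has a corner at the distinguished intersection point $v_k$ on $\bound t$, but in this situation the local geometry forces exactly one alternate decomposition, again pairing the contributions.

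For $\poly{P}_k$ the same scheme applies, now with $t$ replaced by an empty pentagon $p$ having distinguished vertex $u_k$. The enumeration of cases is richer because the pentagon has five corners instead of three, but each case is handled by the same local geometric analysis: in the generic position the pentagon and rectangle may be swapped to produce a second $(r', p')$ decomposition; when they share a corner, the decomposition through the alternate intermediate generator can be read off directly from the picture. The main technical obstacle will be a careful enumeration near $u_k$ — because $u_k$ lies on the two distinct vertical circles $\beta_k$ and $\beta_{k+1}$, a rectangle in $G_{k+1}$ composed after $p$ can have an edge on $\beta_{k+1}$ meeting the pentagon at $u_k$, and one must verify that the induced swap produces a pentagon-plus-rectangle pair in $G_k$ of the right type, with the $\markers$-avoidance and emptiness conditions preserved. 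Once all cases in both enumerations are checked to come in pairs, the two identities hold over $\F{2}$ and the lemma follows.
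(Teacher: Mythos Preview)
Your proposal is correct and follows essentially the same juxtaposition-and-pairing approach as the paper: enumerate composite domains arising from a rectangle together with a pentagon or triangle, classify by relative position (disjoint, overlapping interiors, shared corner), and exhibit the alternate decomposition in each case. The paper's proof is terser and explicitly notes that ``there are no special cases,'' so the complications you anticipate near $u_k$ and $v_k$ turn out not to require separate treatment --- the alternate decomposition is always of the standard rectangle-plus-polygon form.
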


\begin{proof}
  The proof is similar to that of Lemma~3.1 of \cite{MOST07}. We consider 
  domains which are obtained as the juxtaposition of a pentagon or a triangle, 
  and a rectangle. There are a few possibilities; in particular, the polygons 
  may be disjoint, their interiors may overlap, or they may share a common 
  corner. If the polygons are disjoint or if their interiors overlap, the 
  domain can be decomposed as either $r * p$ or $p *r$, and so does not 
  contribute to $\bound_{k+1} \comp f_k + f_k \comp \bound_k$. If the polygons 
  share a common corner, the resulting domain always has an alternate 
  decomposition, as shown in Figure~\ref{fig:chain}. In all cases, the domain 
  can be decomposed in two ways, and makes no contribution to $\bound_{k+1} 
  \comp f_k + f_k \comp \bound_k$.  Note that each domain has exactly two 
  decompositions, both of which are counted in $\bound_{k+1} \comp f_k + f_k 
  \comp \bound_k$; this is not going to be the case in similar lemmas later.
\end{proof}

\begin{figure}
  \includegraphics[width=0.8\textwidth]{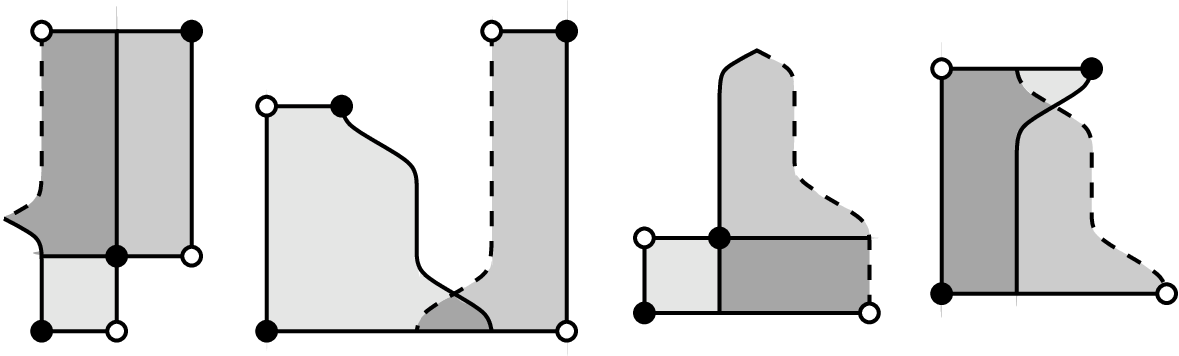}
  \caption{Two typical domains that arise as the juxtaposition of a pentagon 
    and a rectangle, and two that arise as that of a triangle and a rectangle. 
    The $\beta_k$ curve is solid while the $\beta_{k+1}$ curve is dotted.}
  \label{fig:chain}
\end{figure}

\begin{rmk}
  \label{rmk:tri_decomp}
  In the proof above, every domain that arises as the juxtaposition of a 
  triangle and a rectangle has exactly two decompositions, one contributing to 
  $\bound_{k+1} \comp \poly{T}_k$ and one to $\poly{T}_k \comp \bound_k$. The 
  analogous statement is not true for $\poly{P}_k$.
\end{rmk}

Henceforth, when considering the composition of two maps that count polygons, 
we shall ignore the cases where the two polygons are disjoint or have 
overlapping interiors, since we can always decompose the domain as either $p_1 
* p_2$ or $p_2 * p_1$ in these cases.

Next, we define the chain homotopies $\phi_k \colon C_k \to C_{k+2}$ by 
counting hexagons and quadrilaterals.

Given $\gen{x} \in \gen{S} (\grid_k)$ and $\gen{y} \in \gen{S} (\grid_{k+2})$, 
let $\Hex_k (\gen{x}, \gen{y})$ denote the space of embedded hexagons with the 
following properties. First of all, $\Hex_k (\gen{x}, \gen{y})$ is empty unless 
$\gen{x}, \gen{y}$ coincide at exactly $n - 2$ points. An element $p$ of 
$\Hex_k (\gen{x}, \gen{y})$ is an embedded disk in $\poly{T}$, whose boundary 
consists of six arcs, each contained in horizontal or vertical circles; under 
the orientation induced on the boundary of $p$, we start at the 
$\beta_k$-component of $\gen{x}$, traverse the arc of a horizontal circle, meet 
its corresponding component of $\gen{y}$, proceed to an arc of a vertical 
circle, meet the corresponding component of $\gen{x}$, continue through another 
horizontal circle, meet the component of $\gen{y}$ contained in $\beta_{k+2}$, 
proceed to an arc in $\beta_{k+2}$, meet the intersection point $u_{k+1}$ of 
$\beta_{k+1}$ and $\beta_{k+2}$, traverse an arc in $\beta_{k+1}$, meet the 
intersection point $u_k$ of $\beta_k$ and $\beta_{k+1}$, and finally, traverse 
an arc in $\beta_k$ until we arrive back at the initial component of $\gen{x}$. 
All the angles here are at most straight angles. The space of empty hexagons $p 
\in \Hex_k (\gen{x}, \gen{y})$ with $\gen{x} \cap \Int (p) = \eset$ is denoted 
$\eHex_k (\gen{x}, \gen{y})$.

Similarly, we let $\Quad_k (\gen{x}, \gen{y})$ denote the space of embedded 
quadrilaterals with the following properties. $\Quad_k (\gen{x}, \gen{y})$ is 
empty unless $\gen{x}, \gen{y}$ coincide at exactly $n - 1$ points. An element 
$p$ of $\Quad_k (\gen{x}, \gen{y})$ is an embedded disk in $\poly{T}$, whose 
boundary consists of four arcs, each contained in horizontal or vertical 
circles; under the orientation induced on the boundary of $p$, we start at the 
$\beta_k$-component of $\gen{x}$, traverse the arc of a horizontal circle, meet 
the component of $\gen{y}$ contained in $\beta_{k+2}$, proceed to an arc in 
$\beta_{k+2}$, meet the intersection point $u_{k+1}$ of $\beta_{k+1}$ and 
$\beta_{k+2}$, proceed to an arc in $\beta_{k+1}$, meet the intersection point 
$v_k$ of $\beta_k$ with $\beta_{k+1}$, and finally traverse an arc in $\beta_k$ 
to return to the initial component of $\gen{x}$. All the angles here are at 
most straight angles. It is clear that all quadrilaterals $p \in \Tri_k 
(\gen{x}, \gen{y})$ satisfy $\gen{x} \cap \Int (p) = \eset$.

\begin{figure}
  \includegraphics[width=0.741\textwidth]{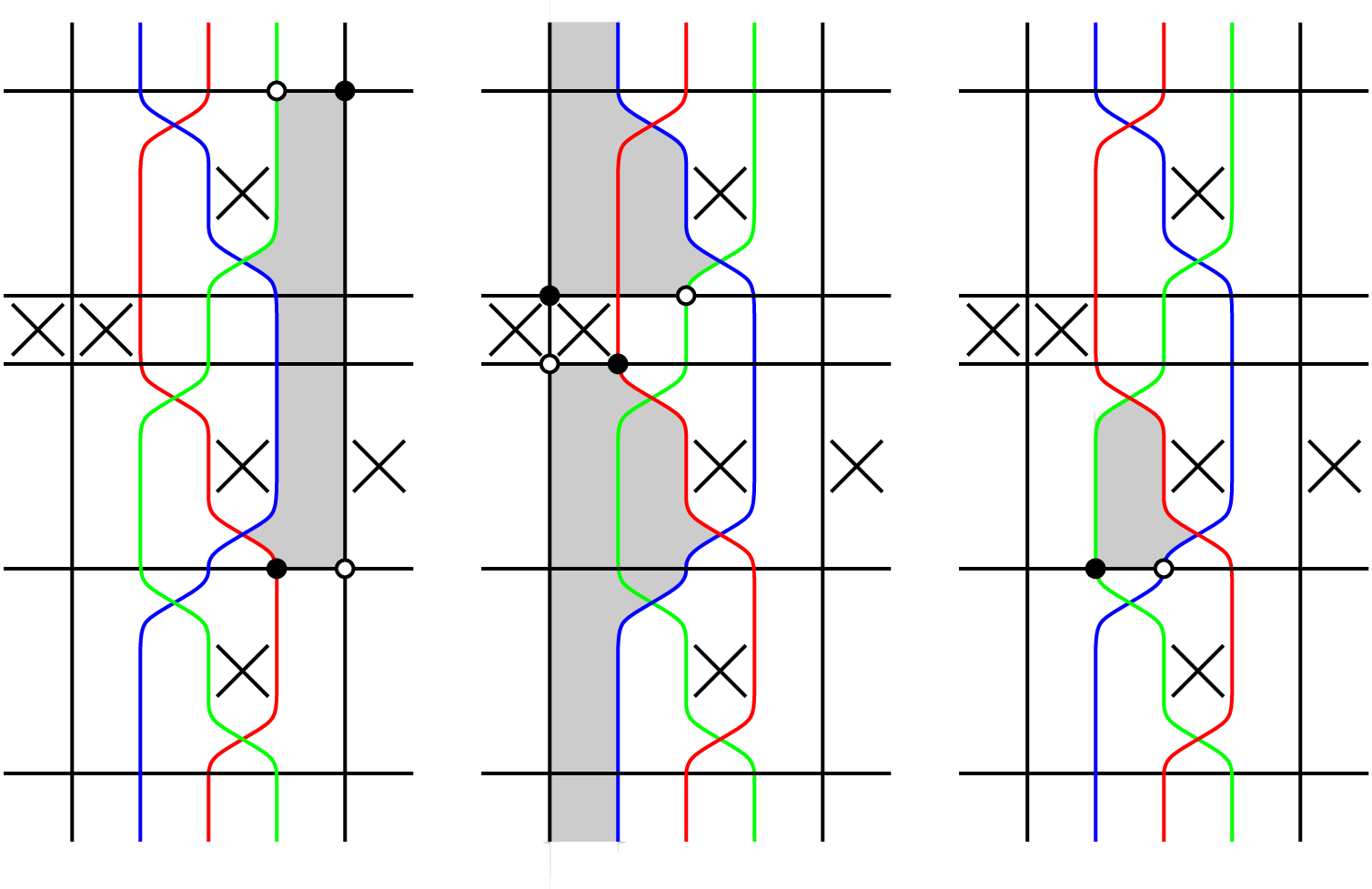}
  \caption{Two allowed hexagons in $\eHex_k (\gen{x}, \gen{y})$ and an allowed 
    quadrilateral in $\Quad_k (\gen{x}, \gen{y})$. The hexagon in the middle 
    figure is the only allowed empty hexagon that is a left domain.}
  \label{fig:hex_quad}
\end{figure}

Given $\gen{x} \in \gen{S} (\grid_k)$, we define
\begin{align*}
  \poly{H}_k (\gen{x}) = \sum_{\gen{y} \in \gen{S} (\grid_{k+2})} 
  \sum_{\substack{p \in \eHex_k (\gen{x}, \gen{y})\\ \Int (p) \cap \markers = 
      \eset}} \gen{y} \in C_{k+2},\\
  \poly{Q}_k (\gen{x}) = \sum_{\gen{y} \in \gen{S} (\grid_{k+2})} 
  \sum_{\substack{p \in \Quad_k (\gen{x}, \gen{y})\\ \Int (p) \cap \markers = 
      \eset}} \gen{y} \in C_{k+2},
\end{align*}
and
\[
  \phi_k (\gen{x}) = \poly{H}_k (\gen{x}) + \poly{Q}_k (\gen{x}) \in C_{k+2}.
\]

We say that a domain $p$ is a \emph{left domain} if $\Int (p) \cap \Int (b) = 
\eset$, and a \emph{right domain} if $\Int (p) \cap \Int (b) \neq \eset$.

\begin{lem}
  \label{lem:cond_1}
  The maps $f_k$ and $\phi_k$ satisfy Condition~(1) of Lemma~\ref{lem:hom_alg}.
\end{lem}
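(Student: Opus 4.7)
The plan is to prove the chain-homotopy identity
\[
  f_{k+1} \comp f_k + \bound_{k+2} \comp \phi_k + \phi_k \comp \bound_k = 0
\]
by the juxtaposition-and-pairing technique used to show $\bound \comp \bound = 0$ and in the proof of Lemma~\ref{lem:chain}. For any fixed generators $\gen{x} \in \gen{S}(G_k)$ and $\gen{z} \in \gen{S}(G_{k+2})$ and any domain $D \in \pi(\gen{x}, \gen{z})$, it suffices to show that the mod-$2$ count of all decompositions of $D$ that contribute to any of the three terms is zero. By the remark following Lemma~\ref{lem:chain}, decompositions in which the two constituent polygons are disjoint or have strictly overlapping interiors pair off trivially via the swap, so we may restrict attention to decompositions whose two polygons share a common corner.

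The enumeration of the remaining corner-sharing cases is organised by polygon type and matches decompositions in $f_{k+1} \comp f_k$ with decompositions in $\bound_{k+2} \comp \phi_k + \phi_k \comp \bound_k$. A pentagon-pentagon juxtaposition in $\poly{P}_{k+1} \comp \poly{P}_k$ has corners at $u_k$ and $u_{k+1}$, so its fused region is bounded by six arcs and corresponds to a hexagon counted in $\eHex_k$; this same domain admits a canonical alternate decomposition either as a rectangle followed by a hexagon, contributing to $\bound_{k+2} \comp \poly{H}_k$, or as a hexagon followed by a rectangle, contributing to $\poly{H}_k \comp \bound_k$. Analogously, corner-sharing triangle-pentagon or pentagon-triangle juxtapositions, whose fused boundaries pass through one $u$- and one $v$-intersection, correspond to quadrilaterals in $\Quad_k$ and pair with rectangle-quadrilateral or quadrilateral-rectangle decompositions in $\bound_{k+2} \comp \poly{Q}_k + \poly{Q}_k \comp \bound_k$. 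The remaining corner-sharing decompositions---triangle-triangle composites in $\poly{T}_{k+1} \comp \poly{T}_k$, together with any rectangle-$\phi$ or $\phi$-rectangle decompositions not yet partnered---admit alternate decompositions of the same form within a single term, exactly as in the proof of Lemma~\ref{lem:chain}. Each canonical pairing can be verified schematically by pictures in the spirit of Figure~\ref{fig:chain}.

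I expect the main obstacle to be the completeness of the enumeration at the crossing, where the polygons degenerate in non-generic ways: a shared corner may coincide with one of the distinguished intersection points $u_k$ or $v_k$, or a constituent polygon may abut the annulus $b$ or one of the triangles $t_k$. One must verify that every corner-sharing decomposition of every $D$ falls into exactly one of the bijective pairings described above, with no decomposition orphaned or double-counted in these boundary situations. This bookkeeping is entirely local to the crossing and is carried out by direct inspection; the left-domain/right-domain dichotomy introduced just before the statement separates the enumeration into compatible halves, which is exactly what permits clean case-analysis whenever the interior of a polygon could potentially reach across $b$.
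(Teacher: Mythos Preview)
Your overall strategy is right, but there is a genuine gap in two of your pairings, and it stems from the framing in your first paragraph: you fix a single domain $D \in \pi(\gen{x},\gen{z})$ and claim the mod-$2$ count of its decompositions vanishes. In this lemma that is \emph{not} always true; some cancellations are between decompositions of two \emph{different} domains $p$ and $p'$ connecting the same $\gen{x}$ and $\gen{z}$.

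Concretely, your treatment of $\poly{T}_{k+1}\comp\poly{P}_k$ and $\poly{T}_{k+1}\comp\poly{T}_k$ fails. A composite in $\poly{T}_{k+1}\comp\poly{P}_k$ has its $\beta$-corners at $u_k$ and $v_{k+1}$, while a quadrilateral in $\Quad_k$ has corners at $u_{k+1}$ and $v_k$; these cannot match, so there is no quadrilateral--rectangle alternate decomposition of that domain. Likewise, the triangle--triangle composite $p\in\poly{T}_{k+1}\comp\poly{T}_k$ has as its only alternate decomposition the small triangle $t_{k+1}$ together with a triangle bounded by $\beta_k$, $\beta_{k+2}$ and a horizontal arc, neither of which is counted in any of our maps; so $p$ has \emph{no} partner among decompositions of $p$ itself, contrary to your claim that it ``admits alternate decompositions of the same form within a single term.'' The paper's resolution in both cases is to swap a sub-region of $p$ for another: replace $t_{k+2}$ by $t_k$ (for $\poly{T}_{k+1}\comp\poly{P}_k$) or replace $t_{k+1}$ by the annulus $b$ (for $\poly{T}_{k+1}\comp\poly{T}_k$) to obtain a \emph{different} domain $p'$, also from $\gen{x}$ to $\gen{z}$, whose unique decomposition lies in $\bound_{k+2}\comp\poly{Q}_k$, respectively in $\poly{P}_{k+1}\comp\poly{P}_k$, $\bound_{k+2}\comp\poly{H}_k$, or $\poly{H}_k\comp\bound_k$. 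Your last paragraph anticipates boundary subtleties but still speaks only of ``decompositions of every $D$''; the missing idea is precisely this domain-swap mechanism, without which the enumeration cannot close.
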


\begin{proof}
  Juxtaposing a triangle and a pentagon appearing in $\poly{P}_{k+1} \comp 
  \poly{T}_k$, we generically obtain a composite domain that admits a unique 
  alternative decomposition as a quadrilateral and a rectangle, appearing in 
  either $\bound_{k+2} \comp \poly{Q}_k$ or $\poly{Q}_k \comp \bound_k$, except 
  for one special case, which is described in (1) below.  Juxtaposing two 
  pentagons appearing in $\poly{P}_{k+1} \comp \poly{P}_k$, we generically 
  obtain a composite domain that admits a unique alternative decomposition as a 
  hexagon and a rectangle, appearing in either $\bound_{k+2} \comp \poly{H}_k$ 
  or $\poly{H}_k \comp \bound_k$, except for one special case, which is 
  described in (3) below.

  There are three special cases: We consider domains $p$ arising from the 
  juxtaposition of 

  \begin{figure}
    \resizebox{\textwidth}{!}{\input{fig_cond_1.pstex_t}}
    \caption{Three special cases. In each case, there are two domains $p$ and 
      $p'$, each counted exactly once in $f_{k+1} \comp f_k + \bound_{k+2} 
      \comp \phi_k + \phi_k \comp \bound_k$.}
    \label{fig:cond_1}
  \end{figure}

  \begin{enumerate}
    \item a pentagon and a triangle appearing in $\poly{T}_{k+1} \comp 
      \poly{P}_k$, such that the $\beta_k$-com\-po\-nent of $\gen{x}$ does not 
      lie on the boundary of any annular component of $\torus$ minus the 
      vertical circles (including $\beta_\infty, \beta_0, \beta_1$). Visually, 
      the $\beta_k$-component of $\gen{x}$ lies on the central vertical axis of 
      the figure. The domain $p$ can only be alternatively decomposed as the 
      triangle $t_{k+2}$ and a pentagon, bounded by $\beta_k$, a horizontal 
      arc, a vertical arc, another horizontal arc and $\beta_{k+2}$, in its 
      induced orientation. The pentagon is in the opposite orientation as one 
      that would be counted in the map $f_{k+2}$, and its boundary meets only 
      the intersection point $v_{k+2}$; such a pentagon is not counted in any 
      map.  The triangle $t_{k+2}$ is not counted in any map either. Therefore, 
      $p$ is counted exactly once in $f_{k+1} \comp f_k + \bound_{k+2} \comp 
      \phi_k + \phi_k \comp \bound_k$.  However, we can replace $t_{k+2}$ with 
      the triangle $t_k$, and obtain a corresponding domain $p'$ that also 
      connects $\gen{x}$ to $\gen{y}$.  The new domain $p'$ admits a unique 
      alternative decomposition as a triangle and a pentagon, counted in 
      $\poly{P}_{k+1} \comp \poly{T}_k$.  See Figure~\ref{fig:cond_1}~(1).

    \item a pentagon and a triangle appearing in $\poly{T}_{k+1} \comp 
      \poly{P}_k$, such that the $\beta_k$-com\-po\-nent of $\gen{x}$ lies on 
      the boundary of an annular component of $\torus$ minus the vertical 
      circles (including $\beta_\infty, \beta_0, \beta_1$).  Visually, the 
      $\beta_k$-component of $\gen{x}$ lies to the left of the central vertical 
      axis of the figure. The domain $p$ can only be alternatively decomposed 
      as the triangle $t_{k+2}$ and a pentagon, bounded by $\beta_k$, a 
      horizontal arc, a vertical arc, another horizontal arc and $\beta_{k+2}$, 
      in its induced orientation.  The pentagon is in the opposite orientation 
      as one that would be counted in the map $f_{k+2}$, and its boundary meets 
      only the intersection point $v_{k+2}$; such a pentagon is not counted in 
      any map.  The triangle $t_{k+2}$ is not counted in any map either.  
      Therefore, $p$ is counted exactly once in $f_{k+1} \comp f_k + 
      \bound_{k+2} \comp \phi_k + \phi_k \comp \bound_k$.  However, we can 
      replace $t_{k+2}$ with the triangle $t_k$, and obtain a corresponding 
      domain $p'$ that also connects $\gen{x}$ to $\gen{y}$.  The new domain 
      $p'$ admits a unique alternative decomposition as a quadrilateral and a 
      rectangle, counted in $\bound_{k+2} \comp \poly{Q}_k$.  See 
      Figure~\ref{fig:cond_1}~(2).

    \item two triangles appearing in $\poly{T}_{k+1} \comp \poly{T}_k$. The 
      domain $p$ can only be alternatively decomposed as the triangle $t_{k+1}$ 
      and another triangle, bounded by segments of $\beta_k, \beta_{k+2}$ and a 
      horizontal circle in its induced orientation, and having $u_{k+2}$ as a 
      corner.  Since neither triangle is counted in any of the maps $f_k$ or 
      $\phi_k$, $p$ is counted exactly once in $f_{k+1} \comp f_k + 
      \bound_{k+2} \comp \phi_k + \phi_k \comp \bound_k$.  However, we can 
      replace $t_{k+1}$ with the annulus $b$, and obtain a corresponding domain 
      $p'$ that also connects $\gen{x}$ to $\gen{y}$.  The situation is similar 
      to the special case in the proof of Lemma~3.1 of \cite{MOST07}. Depending 
      on the initial point $\gen{x}$, the new domain $p'$ admits a unique 
      alternative decomposition as two pentagons or as a hexagon and a 
      rectangle, counted either in $\poly{P}_{k+1} \comp \poly{P}_k$, in 
      $\bound_{k+2} \comp \poly{H}_k$, or in $\poly{H}_k \comp \bound_k$. See 
      Figure~\ref{fig:cond_1}~(3).
  \end{enumerate}

  \begin{table}
    \captionsetup{belowskip=10pt}
    \centering
    \begin{tabular}{|c|c|c||c|c|}
      \hline
      \multicolumn{2}{|c|}{Term in} & Position & Cancels with Term in & Sp.\ 
      Case\\
      \hline \hline
      \multirow{9}{*}{$f_{k+1} \comp f_k$} & \multirow{2}{*}{$\poly{P}_{k+1} 
        \comp \poly{P}_k$} & left, right ($\not\supset b$) & $\bound_{k+2} 
      \comp \poly{H}_k$ or $\poly{H}_k \comp \bound_k$ &\\
      \cline{3-5}
      & & right ($\supset b$) & $\poly{T}_{k+1} \comp \poly{T}_k$ & (3)\\
      \cline{2-5}
      & \multirow{2}{*}{$\poly{T}_{k+1} \comp \poly{P}_k$} & left (central $x$) 
      & $\poly{P}_{k+1} \comp \poly{T}_k$ & (1)\\
      \cline{3-5}
      & & left (left $x$) & $\bound_{k+2} \comp \poly{Q}_k$ & (2)\\
      \cline{2-5}
      & \multirow{3}{*}{$\poly{P}_{k+1} \comp \poly{T}_k$} & left (central $y$) 
      & $\poly{Q}_k \comp \bound_k$ &\\
      \cline{3-5}
      & & left (otherwise) & $\poly{T}_{k+1} \comp \poly{P}_k$ & (1)\\
      \cline{3-5}
      & & right & $\bound_{k+2} \comp \poly{Q}_k$ &\\
      \cline{2-5}
      & \multirow{2}{*}{$\poly{T}_{k+1} \comp \poly{T}_k$} & 
      \multirow{2}{*}{left} & $\poly{P}_{k+1} \comp \poly{P}_k$, $\bound_{k+2} 
      \comp \poly{H}_k$ & \multirow{2}{*}{(3)}\\
      & & & or $\poly{H}_k \comp \bound_k$ &\\
      \hline
      \multirow{6}{*}{$\bound_{k+2} \comp \phi_k$} & 
      \multirow{3}{*}{$\bound_{k+2} \comp \poly{H}_k$} & left & $\poly{P}_{k+1} 
      \comp \poly{P}_k$ &\\
      \cline{3-5}
      & & right ($\not\supset b$) & $\poly{P}_{k+1} \comp \poly{P}_k$ or 
      $\poly{H}_k \comp \bound_k$ &\\
      \cline{3-5}
      & & right ($\supset b$) & $\poly{T}_{k+1} \comp \poly{T}_k$ & (3)\\
      \cline{2-5}
      & \multirow{3}{*}{$\bound_{k+2} \comp \poly{Q}_k$} & left (central $y$) & 
      $\poly{Q}_k \comp \bound_k$ &\\
      \cline{3-5}
      & & left (otherwise) & $\poly{T}_{k+1} \comp \poly{P}_k$ & (2)\\
      \cline{3-5}
      & & right & $\poly{P}_{k+1} \comp \poly{T}_k$ or $\poly{Q}_k \comp 
      \bound_k$ &\\
      \hline
      \multirow{5}{*}{$\phi_k \comp \bound_k$} & \multirow{3}{*}{$\poly{H}_k 
        \comp \bound_k$} & left & $\poly{P}_{k+1} \comp \poly{P}_k$ &\\
      \cline{3-5}
      & & right ($\not\supset b$) & $\poly{P}_{k+1} \comp \poly{P}_k$ or 
      $\bound_{k+2} \comp \poly{H}_k$ &\\
      \cline{3-5}
      & & right ($\supset b$) & $\poly{T}_{k+1} \comp \poly{T}_k$ & (3)\\
      \cline{2-5}
      & \multirow{2}{*}{$\poly{Q}_k \comp \bound_k$} & left & $\poly{P}_{k+1} 
      \comp \poly{T}_k$ or $\bound_{k+2} \comp \poly{Q}_k$ &\\
      \cline{3-5}
      & & right & $\bound_{k+2} \comp \poly{Q}_k$ &\\
      \hline
    \end{tabular}
    \caption{This table shows how the terms cancel each other in 
      Lemma~\ref{lem:cond_1}. A left domain is indicated as ``(central $x$)'' 
      if the $\beta_k$-component of $\gen{x}$ lies on the central vertical axis 
      of the figure, and ``(left $x$)'' if it lies to the left of this axis; 
      similarly for $\gen{y}$. A right domain is indicated as ``($\supset b$)'' 
      if $\Int (p) \supset \Int (b)$, and ``($\not\supset b$)'' otherwise.  The 
      special cases are shown in Figure~\ref{fig:cond_1}.}
    \label{tab:cond_1}
  \end{table}

  Finally, the remaining terms in $\bound_{k+2} \comp \poly{Q}_k$ cancel with 
  terms in $\poly{Q}_k \comp \bound_k$, and the remaining terms in 
  $\bound_{k+2} \comp \poly{H}_k$ cancel with terms in $\poly{H}_k \comp 
  \bound_k$.  Table~\ref{tab:cond_1} summarizes how the terms cancel each 
  other.
\end{proof}

We now define the chain homotopy $\psi_k \colon C_k \to C_k$ by counting 
heptagons.

Given $\gen{x} \in \gen{S} (\grid_k)$ and $\gen{y} \in \gen{S} (\grid_{k+2})$, 
let $\Hept_k (\gen{x}, \gen{y})$ denote the space of embedded heptagons. First 
of all, $\Hept_k (\gen{x}, \gen{y})$ is empty unless $\gen{x}, \gen{y}$ 
coincide at exactly $n - 2$ points. An element $p$ of $\Hept_k (\gen{x}, 
\gen{y})$ is an embedded disk in $\torus$, whose boundary consists of seven 
arcs, each contained in horizontal or vertical circles; under the orientation 
induced on the boundary of $p$, we start at the $\beta_k$-component of 
$\gen{x}$, traverse the arc of a horizontal circle, meet its corresponding 
component of $\gen{y}$, proceed to an arc of a vertical circle, meet the 
corresponding component of $\gen{x}$, continue through another horizontal 
circle, meet the component of $\gen{y}$ contained in $\beta_k$, proceed to an 
arc in $\beta_k$, meet the intersection point $u_{k+2}$ of $\beta_k$ and 
$\beta_{k+2}$, proceed to an arc in $\beta_{k+2}$, meet the intersection point 
$u_{k+1}$ of $\beta_{k+1}$ and $\beta_{k+2}$, traverse an arc in $\beta_{k+1}$, 
meet the intersection point $u_k$ of $\beta_k$ and $\beta_{k+1}$, and finally, 
traverse an arc in $\beta_k$ until we arrive back at the initial component of 
$\gen{x}$. All the angles here are again at most straight angles. The space of 
empty heptagons $p \in \Hept_k (\gen{x}, \gen{y})$ with $\gen{x} \cap \Int (p) 
= \eset$ is denoted $\eHept_k (\gen{x}, \gen{y})$.

\begin{figure}
  \includegraphics[width=0.222\textwidth]{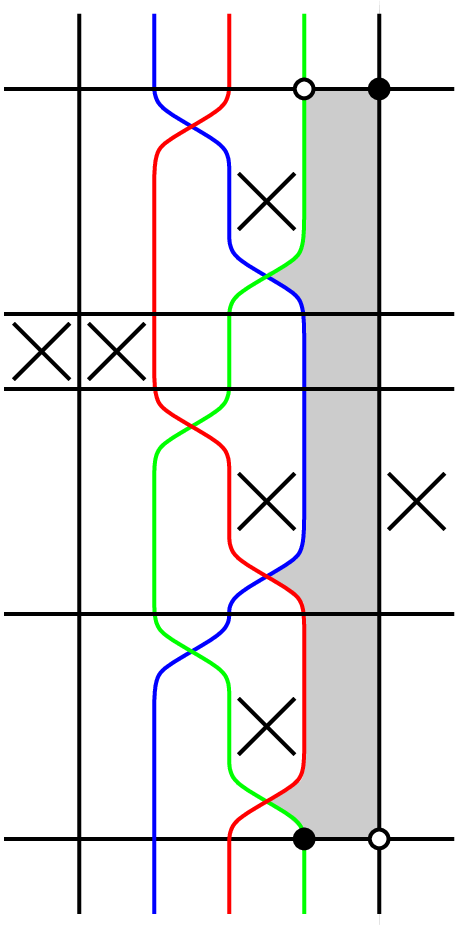}
  \caption{An allowed heptagon in $\eHept_k (\gen{x}, \gen{y})$. Note that 
    allowed heptagons are necessarily right domains.}
  \label{fig:hept}
\end{figure}

Given $\gen{x} \in \gen{S} (\grid_k)$, we define
\[
  \psi_k (\gen{x}) = \poly{K}_k (\gen{x}) = \sum_{\gen{y} \in \gen{S} 
    (\grid_k)} \sum_{\substack{p \in \eHept_k (\gen{x}, \gen{y})\\ \Int (p) 
      \cap \markers = \eset}} \gen{y} \in C_k.
\]

\begin{lem}
  \label{lem:cond_2}
  We have
  \[
    \phi_{k+1} \comp f_k + f_{k+2} \comp \phi_k + \bound_k \comp \psi_k + 
    \psi_k \comp \bound_k = \id,
  \]
  so that the maps $f_k$ and $\phi_k$ satisfy Condition~(2) of 
  Lemma~\ref{lem:hom_alg}.
\end{lem}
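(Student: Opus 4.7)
The plan is to prove this identity by the same juxtaposition-and-alternative-decomposition method used in Lemma~\ref{lem:cond_1}, except that here we must not only cancel generic juxtapositions in pairs but also identify a distinguished family of domains whose contributions sum to the identity map.

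First, I would enumerate all juxtaposition types that can contribute to the left-hand side. From $\phi_{k+1} \comp f_k$ we get hexagon-after-pentagon, hexagon-after-triangle, quadrilateral-after-pentagon, and quadrilateral-after-triangle pairs; from $f_{k+2} \comp \phi_k$ we get pentagon-after-hexagon, triangle-after-hexagon, pentagon-after-quadrilateral, and triangle-after-quadrilateral pairs; and from $\bound_k \comp \psi_k + \psi_k \comp \bound_k$ we get rectangle-and-heptagon pairs. In each case the composite domain $p \in \pi(\gen{x}, \gen{y})$ has $\gen{y} \in \gen{S}(G_k)$. As in the proof of Lemma~\ref{lem:cond_1}, the generic possibilities (disjoint polygons, overlapping interiors, shared corner with a clean cut) admit a unique alternate decomposition into another pair on this list, and hence cancel over $\F{2}$. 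I would organize this cancellation in a table modelled on Table~\ref{tab:cond_1}, splitting into left and right domains as needed.

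Second, I would isolate the \emph{special} juxtapositions that do not admit such an alternative decomposition. These come from domains that meet the annulus $b$ or one of the triangles $t_j$, analogous to special case~(2) in the previous lemma. The key observation is that certain thin heptagons -- those that wrap through the region near $b$ using all three intersection points $u_k, u_{k+1}, u_{k+2}$ -- have only one decomposition on the list, and this decomposition terminates at the same generator it started from. More precisely, for each $\gen{x} \in \gen{S}(G_k)$ there should be exactly one such uncancelled composite domain $p \in \pi(\gen{x}, \gen{x})$ whose net contribution to the left-hand side is $\gen{x}$ itself; all other composite domains ending at $\gen{x}$ cancel with a partner, and all composite domains ending at $\gen{y} \neq \gen{x}$ cancel among themselves.

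Third, I would verify this identification concretely: for each $\gen{x}$, the unique uncancelled term arises from a thin heptagon in $\eHept_k(\gen{x}, \gen{x})$ whose interior is disjoint from $\markers$, which can only be juxtaposed with a degenerate rectangle and therefore appears a single time in $\bound_k \comp \psi_k + \psi_k \comp \bound_k$ (or, depending on the combinatorics, in one of the $\phi_{k+1}\comp f_k$ or $f_{k+2}\comp\phi_k$ summands). The main obstacle will be the careful bookkeeping of these special cases: one must show that for \emph{every} generator $\gen{x}$ there is exactly one such thin domain, that it is always marker-free (so it is actually counted), and that every other juxtaposition involving $b$ or a $t_j$ still admits a partner decomposition. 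I expect this to require a case analysis depending on which of the two intersection points of $\beta_k$ with each horizontal circle belongs to $\gen{x}$ near the crossing, paralleling the case analysis in the special cases of Lemma~\ref{lem:cond_1}.
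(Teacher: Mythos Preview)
Your overall strategy is correct, but Step~3 contains a genuine error in identifying where the identity term comes from. You propose that the uncancelled contribution to $\id$ arises from ``a thin heptagon in $\eHept_k(\gen{x},\gen{x})$'' paired with a ``degenerate rectangle''. This cannot work: by definition $\Hept_k(\gen{x},\gen{y})$ is empty unless $\gen{x}$ and $\gen{y}$ coincide at exactly $n-2$ points, so there is no heptagon from $\gen{x}$ to itself; likewise there are no degenerate rectangles in this theory. The map $\psi_k = \poly{K}_k$ always moves a generator, so $\bound_k\comp\psi_k + \psi_k\comp\bound_k$ never produces a domain in $\pi(\gen{x},\gen{x})$ unless it is a genuine juxtaposition of a nontrivial heptagon and a nontrivial rectangle, and those all cancel in pairs with pentagon-hexagon juxtapositions.

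The identity actually comes from $\phi_{k+1}\comp f_k + f_{k+2}\comp \phi_k$. For each $\gen{x}$, depending on where the $\beta_k$-component of $\gen{x}$ sits, there is exactly one domain $p\in\pi(\gen{x},\gen{x})$ with a unique counted decomposition: either $p$ is the small triangle $t_{k+1}$ decomposed as triangle-then-quadrilateral in $\poly{Q}_{k+1}\comp\poly{T}_k$, or $p$ is $t_k$ decomposed as quadrilateral-then-triangle in $\poly{T}_{k+2}\comp\poly{Q}_k$, or $p$ is the annulus $b$ decomposed as pentagon-then-hexagon (or hexagon-then-pentagon) in $\poly{H}_{k+1}\comp\poly{P}_k$ or $\poly{P}_{k+2}\comp\poly{H}_k$. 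These are the contributions that yield $\id$. Your parenthetical hedge (``or, depending on the combinatorics, in one of the $\phi_{k+1}\comp f_k$ or $f_{k+2}\comp\phi_k$ summands'') is in fact the correct answer in every case, and the heptagon scenario you describe does not occur.
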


\begin{proof}
  First, we see that juxtaposing either a triangle and a hexagon, or a pentagon 
  and a quadrilateral, does not contribute to $\phi_{k+1} \comp f_k + f_{k+2} 
  \comp \phi_k$. In other words, we first claim that
  \[
    \poly{P}_{k+2} \comp \poly{Q}_k + \poly{Q}_{k+1} \comp \poly{P}_k + 
    \poly{H}_{k+1} \comp \poly{T}_k + \poly{T}_{k+2} \comp \poly{H}_k = 0.
  \]
  There are exactly four cases:  We consider domains $p$ formed by juxtaposing

  \begin{figure}
    \resizebox{\textwidth}{!}{\input{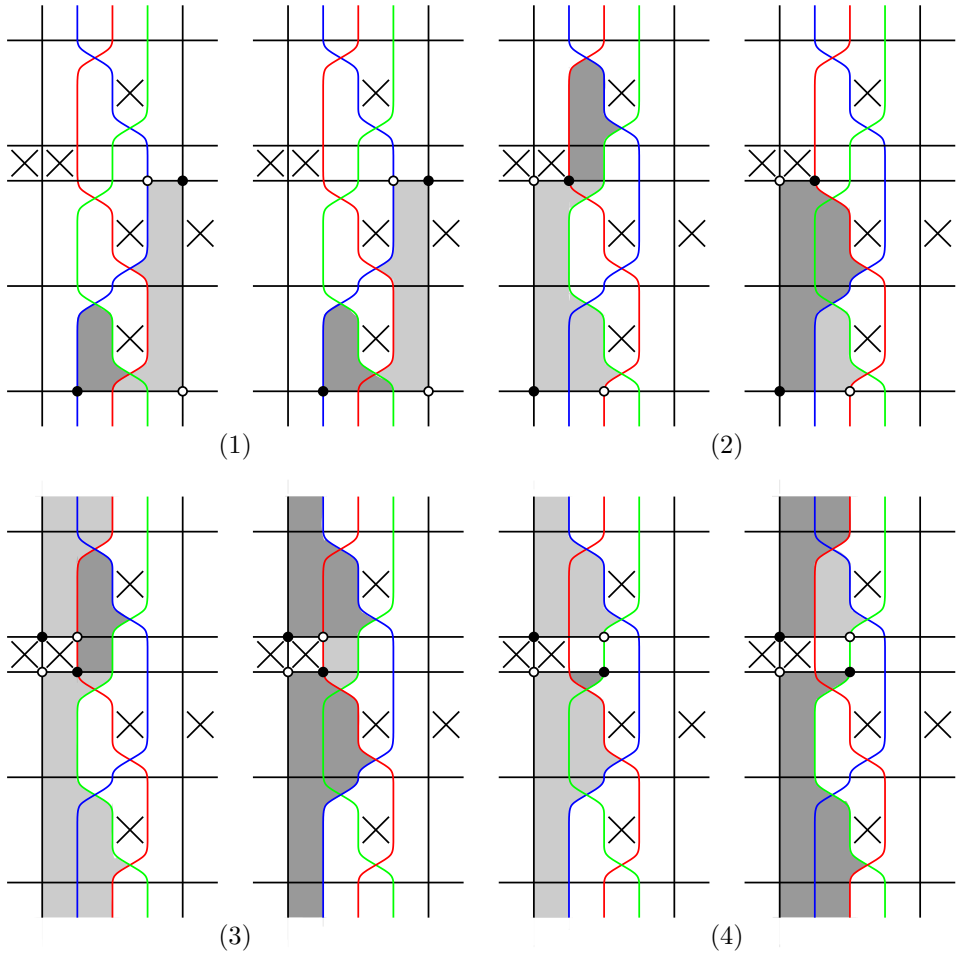}}
    \caption{The four cases when juxtaposing a triangle and a hexagon, or a 
      pentagon and a quadrilateral. All terms that arise cancel out with each 
      other.}
    \label{fig:cond_2}
  \end{figure}

  \begin{enumerate}
    \item a quadrilateral and a pentagon appearing in $\poly{P}_{k+2} \comp 
      \poly{Q}_k$, such that $p$ is a right domain. In this case, $p$ admits a 
      unique alternative decomposition as a triangle and a hexagon appearing in 
      $\poly{H}_{k+1} \comp \poly{T}_k$. See Figure~\ref{fig:cond_2}~(1);

    \item a quadrilateral and a pentagon appearing in $\poly{P}_{k+2} \comp 
      \poly{Q}_k$, such that $p$ is a left domain with height less than $n$. 
      Only this decomposition of $p$ is counted. However, we can replace the 
      triangle $t_k$ inside $p$ by the triangle $t_{k+2}$, and obtain a 
      corresponding domain $p'$ that can be uniquely decomposed as a pentagon 
      and a quadrilateral appearing in $\poly{Q}_{k+1} \comp \poly{P}_k$. See 
      Figure~\ref{fig:cond_2}~(2);

    \item a quadrilateral and a pentagon appearing in $\poly{P}_{k+2} \comp 
      \poly{Q}_k$, such that $p$ is a right domain with height $n$. This is in 
      fact only possible when $k = 1$. Only this decomposition of $p$ is 
      counted. However, $p$ contains the triangles $t_k$ and $t_{k+1}$; we can 
      replace $t_{k+1}$ by the triangle $t_{k+2}$, and obtain a corresponding 
      domain $p'$ that can be uniquely decomposed as a hexagon and a triangle 
      appearing in $\poly{T}_{k+2} \comp \poly{H}_k$. See 
      Figure~\ref{fig:cond_2}~(3).

      The astute reader may find it strange that in this particular case, in 
      $p$ the triangle $t_k$ is ``attached'' to the top of the rest of the 
      domain, whereas in $p'$ it is ``attached'' to the bottom. One way to 
      convince oneself of the validity of the procedure of obtaining $p'$ from 
      $p$, is to think of it as first replacing $t_k$ by $t_{k+2}$, and then 
      replacing $t_{k+1}$ by $t_k$;

    \item a triangle and a hexagon appearing in $\poly{H}_{k+1} \comp 
      \poly{T}_k$, such that $p$ is a left domain. This is only possible when 
      $k = 0$. Only this decomposition of $p$ is counted. However, we can 
      replace the triangle $t_k$ inside $p$ by the triangle $t_{k+2}$, and 
      obtain a corresponding domain $p'$ that can be uniquely decomposed as a 
      pentagon and a quadrilateral appearing in $\poly{Q}_{k+1} \comp 
      \poly{P}_k$. See Figure~\ref{fig:cond_2}~(4).
  \end{enumerate}

  Juxtaposing a pentagon and a hexagon appearing in $\poly{H}_{k+1} \comp 
  \poly{P}_k$ or $\poly{P}_{k+2} \comp \poly{H}_k$, we generically obtain a 
  composite domain that admits a unique alternative decomposition as a heptagon 
  and a rectangle, appearing in $\bound_k \comp \poly{K}_k$ or $\poly{K}_k 
  \comp \bound_k$, except for one special case discussed below.

  \begin{figure}
    \includegraphics[width=\textwidth]{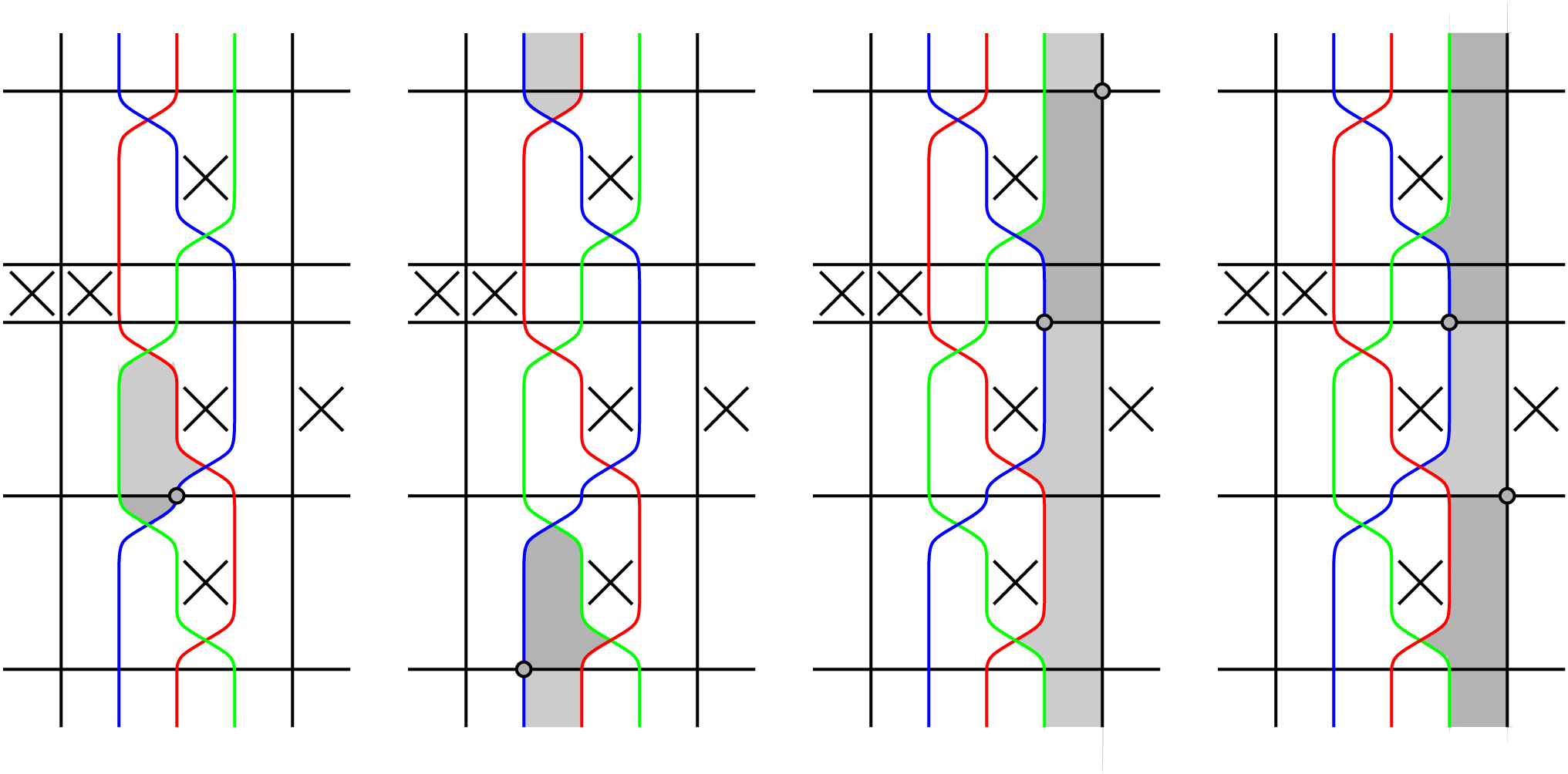}
    \caption{Decomposing the identity map.}
    \label{fig:iden}
  \end{figure}

  Depending on the initial point $\gen{x}$, there exists exactly one domain $p$ 
  connecting $\gen{x}$ to itself that admits a unique decomposition, either as 
  a triangle and a quadrilateral in $\poly{Q}_{k+1} \comp \poly{T}_k$ (in which 
  case $p$ is the triangle $t_{k+1}$), as a quadrilateral and a triangle in 
  $\poly{T}_{k+2} \comp \poly{Q}_k$ ($p$ is the triangle $t_k$), as a pentagon 
  and a hexagon appearing in $\poly{H}_{k+1} \comp \poly{P}_k$ or 
  $\poly{P}_{k+2} \comp \poly{H}_k$ ($p$ is the annulus $b$), or as a rectangle 
  and a heptagon appearing in $\poly{K}_k \comp \bound_k$ or $\bound_k \comp 
  \poly{K}_k$ ($p$ is again the annulus $b$).  Of course, in this case, the 
  identity map is counted once.  See Figure~\ref{fig:iden}.

  \begin{table}
    \captionsetup{belowskip=10pt}
    \centering
    \begin{tabular}{|c|c|c||c|c|}
      \hline
      \multicolumn{2}{|c|}{Term in} & Position & Cancels with Term in & Sp.\ 
      Case\\
      \hline \hline
      \multirow{7}{*}{$\phi_{k+1} \comp f_k$} & \multirow{2}{*}{$\poly{H}_{k+1} 
        \comp \poly{P}_k$} & right ($\neq b$) & $\bound_k \comp \poly{K}_k$ or 
      $\poly{K}_k \comp \bound_k$ &\\
      \cline{3-5}
      & & right ($= b$) & $\id$ &\\
      \cline{2-5}
      & \multirow{2}{*}{$\poly{Q}_{k+1} \comp \poly{P}_k$} & left (ht.\ $< n - 
      1$) & $\poly{P}_{k+2} \comp \poly{Q}_k$ & (2)\\
      \cline{3-5}
      & & left (ht.\ $= n - 1$) & $\poly{H}_{k+1} \comp \poly{T}_k$ & (4)\\
      \cline{2-5}
      & \multirow{2}{*}{$\poly{H}_{k+1} \comp \poly{T}_k$} & left & 
      $\poly{Q}_{k+1} \comp \poly{P}_k$ & (4)\\
      \cline{3-5}
      & & right & $\poly{P}_{k+2} \comp \poly{Q}_k$ & (1)\\
      \cline{2-5}
      & $\poly{Q}_{k+1} \comp \poly{T}_k$ & left & $\id$ &\\
      \hline
      \multirow{7}{*}{$f_{k+2} \comp \phi_k$} & \multirow{2}{*}{$\poly{P}_{k+2} 
        \comp \poly{H}_k$} & right ($\neq b$) & $\bound_k \comp \poly{K}_k$ or 
      $\poly{K}_k \comp \bound_k$ &\\
      \cline{3-5}
      & & right ($= b$) & $\id$ &\\
      \cline{2-5}
      & $\poly{T}_{k+2} \comp \poly{H}_k$ & left & $\poly{P}_{k+2} \comp 
      \poly{Q}_k$ & (3)\\
      \cline{2-5}
      & \multirow{3}{*}{$\poly{P}_{k+2} \comp \poly{Q}_k$} & left (ht.\ $< n$) 
      & $\poly{Q}_{k+1} \comp \poly{P}_k$ & (2)\\
      \cline{3-5}
      & & left (ht.\ $= n$) & $\poly{T}_{k+1} \comp \poly{H}_k$ & (3)\\
      \cline{3-5}
      & & right & $\poly{H}_{k+1} \comp \poly{T}_k$ & (1)\\
      \cline{2-5}
      & $\poly{T}_{k+2} \comp \poly{Q}_k$ & left & $\id$ &\\
      \hline
      \multirow{2}{*}{$\bound_k \comp \psi_k$} & \multirow{2}{*}{$\bound_k 
        \comp \poly{K}_k$} & \multirow{2}{*}{right} & $\poly{H}_{k+1} \comp 
      \poly{P}_k$, $\poly{P}_{k+2} \comp \poly{H}_k$, &\\
      & & & $\poly{K}_k \comp \bound_k$ or $\id$ &\\
      \hline
      \multirow{2}{*}{$\psi_k \comp \bound_k$} & \multirow{2}{*}{$\poly{K}_k 
        \comp \bound_k$} & \multirow{2}{*}{right} & $\poly{H}_{k+1} \comp 
      \poly{P}_k$, $\poly{P}_{k+2} \comp \poly{H}_k$, &\\
      & & & $\bound_k \comp \poly{K}_k$ or $\id$ &\\
      \hline
      \multicolumn{2}{|c|}{\multirow{4}{*}{$\id$}} & \multirow{4}{*}{N/A} & 
      $\poly{H}_{k+1} \comp \poly{P}_k$, $\poly{P}_{k+2} \comp \poly{H}_k$, &\\
      \multicolumn{2}{|c|}{} & & $\bound_k \comp \poly{K}_k$, $\poly{K}_k \comp 
      \bound_k$, &\\
      \multicolumn{2}{|c|}{} & & $\poly{Q}_{k+1} \comp \poly{T}_k$ &\\
      \multicolumn{2}{|c|}{} & & or $\poly{T}_{k+2} \comp \poly{Q}_k$ &\\
      \hline
    \end{tabular}
    \caption{This table shows how the terms cancel each other in 
      Lemma~\ref{lem:cond_2}. The special cases are shown in 
      Figure~\ref{fig:cond_2}.}
    \label{tab:cond_2}
  \end{table}

  Finally, the remaining terms in $\bound_k \comp \poly{K}_k$ cancel with terms 
  in $\poly{K}_k \comp \bound_k$. Table~\ref{tab:cond_2} summarizes how the 
  terms above cancel each other.
\end{proof}

The proof of Proposition~\ref{prop:exact} is completed by combining 
Lemmas~\ref{lem:hom_alg}, \ref{lem:chain}, \ref{lem:cond_1} and 
\ref{lem:cond_2}.

\section{Signs}
\label{sec:signs}

In \cite{MOST07}, sign refinements are given to extend the definition of 
combinatorial knot Floer homology to one with coefficients in $\Z$. In this 
section, we shall likewise assign sign refinements to our maps, to prove the 
analogous statement of Proposition~\ref{prop:exact} with coefficients in $\Z$.

Given a grid diagram, we denote by $\eRect$ the union of $\eRect (\gen{x}, 
\gen{y})$ for all $\gen{x}, \gen{y}$. We follow \cite{MOST07} and adopt the 
following definition:

\begin{defn}
A \emph{true sign assignment}, or simply a \emph{sign assignment}, is a 
function
\[
\sign \colon \eRect \to \set{\pm 1}
\]
with the following properties:

\begin{enumerate}
\label{defn:sign}
\item For any four distinct $r_1, r_2, r_1', r_2' \in \eRect$ with $r_1 * r_2 = 
  r_1' * r_2'$, we have：
\[
\sign (r_1) \cdot \sign (r_2) = - \sign (r_1') \cdot \sign (r_2');
\]

\item For $r_1, r_2 \in \eRect$ such that $r_1 * r_2$ is a vertical annulus, we 
  have
\[
\sign (r_1) \cdot \sign (r_2) = - 1;
\]

\item For $r_1, r_2 \in \eRect$ such that $r_1 * r_2$ is a horizontal annulus, 
  we have
\[
\sign (r_1) \cdot \sign (r_2) = + 1.
\]
\end{enumerate}
\end{defn}

\begin{thm}[Manolescu--Ozsv\'{a}th--Szab\'{o}--Thurston]
\label{thm:sign}
There exists a sign assignment as defined in Definition~\ref{defn:sign}.  
Moreover, this sign assignment is essentially unique: If $\sign_1$ and 
$\sign_2$ are two sign assignments, then there is a function $g \colon \gen{S} 
(\grid) \to \set{\pm 1}$ so that for all $r \in \eRect (\gen{x}, \gen{y})$, 
$\sign_2 (r) = g (\gen{x}) \cdot g (\gen{y}) \cdot \sign_1 (r)$.
\end{thm}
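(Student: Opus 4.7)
The plan is to handle existence and essential uniqueness separately; uniqueness is the easier of the two and would come first.

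For uniqueness, given two sign assignments $\sign_1$ and $\sign_2$, I would form the pointwise product $\tau \colon \eRect \to \set{\pm 1}$ defined by $\tau(r) = \sign_1(r) \cdot \sign_2(r)$. From Definition~\ref{defn:sign}, one checks that $\tau$ satisfies the \emph{untwisted} cocycle conditions $\tau(r_1)\tau(r_2) = \tau(r_1')\tau(r_2')$ whenever $r_1 * r_2 = r_1' * r_2'$, and $\tau(r_1)\tau(r_2) = +1$ whenever $r_1 * r_2$ is any (horizontal or vertical) annulus. The goal is then to construct $g \colon \gen{S}(G) \to \set{\pm 1}$ with $\tau(r) = g(\gen{x}) \cdot g(\gen{y})$ for $r \in \eRect(\gen{x}, \gen{y})$. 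I would do this by fixing a basepoint $\gen{x}_0 \in \gen{S}(G)$, setting $g(\gen{x}_0) = 1$, and parallel-transporting via $\tau$ along any path of empty rectangles in the ``rectangle graph'' on $\gen{S}(G)$. Well-definedness requires two inputs: the graph is connected (a standard fact about grid diagrams), and every loop is generated by the square and annular relations---which is exactly what the cocycle conditions on $\tau$ are designed to trivialize.

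For existence, the task is to produce a consistent assignment of signs to all empty rectangles subject to the three listed conditions. A conceptual framework is cohomological: the three conditions together define a $\Z/2\Z$-valued cocycle condition on a $2$-complex whose vertices are generators, whose edges are empty rectangles, and whose $2$-cells are given by square juxtapositions and the two types of annuli. Existence of a sign assignment then amounts to the vanishing of an obstruction class in the appropriate cohomology group, which can in principle be computed from the combinatorics of generators and rectangles. A more hands-on alternative---the route taken in \cite{MOST07}---is to write down an explicit formula for $\sign(r)$ in terms of signs of permutations naturally associated to $\gen{x}$ and $\gen{y}$, together with a combinatorial correction depending on the placement of $r$ on the torus; the three conditions are then verified by case analysis.

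The main obstacle, as expected, lies in the existence part: regardless of which approach one takes, the consistency of the defining relations requires a careful enumeration of how rectangle juxtapositions interact, particularly when the annular relations come into play and break the symmetry between the two directions of the torus. In practice, since the statement is attributed to Manolescu, Ozsv\'{a}th, Szab\'{o} and Thurston, the cleanest course of action here is to invoke their result from \cite{MOST07} directly rather than reprove it from scratch.
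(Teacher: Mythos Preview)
The paper does not give its own proof of this theorem: it is stated with attribution to Manolescu, Ozsv\'{a}th, Szab\'{o} and Thurston and simply cited from \cite{MOST07}, with only a brief remark afterward about one feature of their construction (that the sign of a rectangle depends only on the underlying permutations). Your final paragraph---invoking \cite{MOST07} directly rather than reproving the result---is exactly what the paper does, so your proposal is correct and aligned with the paper's treatment; the preceding sketches of uniqueness and existence are reasonable but go beyond anything the paper itself contains.
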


\begin{rmk}
  \label{rmk:sign}
  In the construction of a sign assignment in \cite{MOST07}, the sign of a 
  rectangle does not depend on the positions of the $O$'s and $X$'s of the 
  diagram. We can view each generator $\gen{x}$ as a permutation 
  $\sigma_\gen{x}$; If the component of $\gen{x}$ on the $i$-th horizontal 
  circle lies on the $s (i)$-th vertical circle, then we let $\sigma_\gen{x}$ 
  be $(s (1) \, s (2) \dotso s (n))$. Then the sign of a rectangle in $\eRect 
  (\gen{x}, \gen{y})$ depends only on $\sigma_\gen{x}$ and $\sigma_\gen{y}$.
\end{rmk}

The sign assignment from Theorem~\ref{thm:sign} is then used in \cite{MOST07} 
to construct a chain complex over $\Z$ as follows. The complex $\GCt (\grid) = 
\GCt (\grid; \Z)$ is the free $\Z$-module generated by elements of $\gen{S} 
(\grid)$; fixing a sign assignment $\sign$, the complex $\GCt(\grid)$ is 
endowed with the endomorphism $\bound_\sign \colon \GCt(\grid) \to 
\GCt(\grid)$, defined by
\[
\bound_\sign (\gen{x}) = \sum_{\gen{y} \in \gen{S} (\grid)} \sum_{\substack{r 
    \in \Rect^\circ (\gen{x}, \gen{y})\\ \Int (r) \cap \markers = \eset}} \sign 
(r) \cdot \gen{y} \in \GCt(\grid).
\]
One can then see that $(\GCt(\grid), \bound_\sign)$ is a chain complex.  
Indeed, the terms in $\bound_\sign \comp \bound_\sign (\gen{x})$ can be paired 
off as before, by the axioms defining $\sign$. Moreover, given $\sign_1$ and 
$\sign_2$, the map $\Phi \colon (\GCt(\grid), \bound_{\sign_1}) \to 
(\GCt(\grid), \bound_{\sign_2})$, defined by
\[
\Phi (\gen{x}) = g (\gen{x}) \cdot \gen{x},
\]
gives an isomorphism of the two chain complexes. Again, one can take the 
homology of the chain complex $(\GCt(\grid), \bound_\sign)$, and define
\[
  \HKt (\grid) = \Hom_* (\GCt(\grid), \bound_\sign).
\]
It is shown in \cite{MOST07} that
\[
  \HKt (\grid) \cong \HKh (\grid) \otimes V^{n - \ell}
\]
for some $\Z$-module $\HKh (\grid)$ that is a link invariant, where $V$ is a 
rank-$2$ free module over $\Z$, spanned by one generator in bigrading $(-1, 
-1)$ and another in bigrading $(0, 0)$. The link invariant $\HKh (\grid)$, also 
denoted by $\HFGh (L)$, is shown in \cite{Sar11} to be isomorphic to $\HFKh (L, 
\mathfrak{o})$ for some \emph{orientation system} $\mathfrak{o}$ of the link 
$L$.

We are now ready to turn to the proof of the analogous statement of 
Proposition~\ref{prop:exact}, with signs.

\begin{prop}
\label{prop:exact_Z}
There exists an exact triangle
\[
\dotsb \to \HKt (\grid_\infty; \Z) \to \HKt (\grid_0; \Z) \to \HKt (\grid_1; 
\Z) \to \dotsb
\]
\end{prop}

Our proof is reminiscent of that in Section~4 of \cite{MOST07}. We adopt the 
strategy from Section~\ref{sec:skein}; to do so, we must specify the signs used 
in defining our various chain maps and chain homotopies, and check that they 
indeed satisfy Lemma~\ref{lem:hom_alg}:

We begin by considering pentagons. First, we define the notion of a 
\emph{corresponding generator}. For each $\gen{x} \in C_k$, there exist exactly 
one $\gen{x'} \in C_{k+1}$ and one $\gen{x''} \in C_{k+2}$ that are canonically 
closest to $\gen{x}$; we require that $\gen{x, x'}$ and $\gen{x''}$ coincide 
everywhere except on the $\beta$ curves, and $\gen{x'}$ and $\gen{x''}$ are 
obtained from $\gen{x}$ by sliding the $\beta_k$-component horizontally to the 
$\beta_{k+1}$ and $\beta_{k+2}$ curves respectively. We define the maps $c_k^+ 
\colon C_k \to C_{k+1}$ and $c_k^- \colon C_k \to C_{k+2}$ by
\begin{align*}
  c_k^+ (\gen{x}) & = \gen{x'},\\
  c_k^- (\gen{x}) & = \gen{x''}.
\end{align*}

We can now define the straightening maps $d_k^\poly{P} \colon \ePent_k 
(\gen{x}, \gen{y}) \to \eRect_k (\gen{x}, c_k^- (\gen{y}))$ and $e_k^\poly{P} 
\colon \ePent_k (\gen{x}, \gen{y}) \to \eRect_{k+1} (c_k^+ (\gen{x}), \gen{y})$ 
as follows. Given $p \in \ePent_k (\gen{x}, \gen{y})$, we obtain $d_k^\poly{P} 
(p)$ by sliding the $\beta_{k+1}$-component of $\gen{y}$ back to the $\beta_k$ 
curve, thereby post-composing $p$ with a triangle; similarly, we obtain 
$e_k^\poly{P} (p)$ by sliding the $\beta_k$-component of $\gen{x}$ to the 
$\beta_{k+1}$ curve, thereby pre-composing $p$ with another triangle.  Notice 
that by Remark~\ref{rmk:sign}, we have
\[
  \sign (d_k^\poly{P} (p)) = \sign (e_k^\poly{P} (p)).
\]
We now define
\[
  \poly{P}_k (\gen{x}) = \sum_{\gen{y} \in \gen{S} (\grid_{k+1})} 
  \sum_{\substack{p \in \ePent_k (\gen{x}, \gen{y})\\ \Int (p) \cap \markers = 
      \eset}} \epsilon_k^\poly{P} (p) \cdot \gen{y} \in C_{k+1},
\]
where
\[
  \epsilon_k^\poly{P} (p) =
  \begin{cases}
    \sign (d_k^\poly{P} (p)) & \text{if } p \text{ is a left pentagon},\\
    - \sign (d_k^\poly{P} (p)) & \text{if } p \text{ is a right pentagon}.
  \end{cases}
\]

Turning to triangles, we again view generators as permutations. The signature 
$\sgn (\gen{x})$ of a generator $\gen{x}$ is defined to be the signature $\sgn 
(\sigma_\gen{x})$ of the corresponding permutation. Then we define
\[
  \poly{T}_k (\gen{x}) = \sum_{\gen{y} \in \gen{S} (\grid_{k+1})} 
  \sum_{\substack{p \in \Tri_k (\gen{x}, \gen{y})\\ \Int (p) \cap \markers = 
      \eset}} \epsilon_k^\poly{T} (p) \cdot \gen{y} \in C_{k+1},
\]
where
\[
  \epsilon_k^\poly{T} (p) = \sgn (\gen{x}).
\]

Finally, we define
\[
  f_k (\gen{x}) = \poly{P}_k (\gen{x}) + \poly{T}_k (\gen{x}) \in C_{k+1}.
\]

\begin{lem}
\label{lem:chain_Z}
The map $f_k$ is an anti-chain map. In fact, $\poly{P}_k$ and $\poly{T}_k$ are 
both anti-chain maps.
\end{lem}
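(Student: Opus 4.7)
The plan is to show each of $\poly{P}_k$ and $\poly{T}_k$ anti-commutes with the differential on the nose, and then $f_k$ inherits the anti-chain property by linearity. The overall strategy mirrors the juxtaposition argument of Lemma~\ref{lem:chain}: I enumerate every way a composite domain appearing in $\bound_{k+1} \comp f_k + f_k \comp \bound_k$ decomposes, and show the two decompositions arising from a given domain carry opposite signs. The new content over $\Z$ is purely bookkeeping, and falls into two blocks --- triangles (which rely on the parity of $\sgn$) and pentagons (which rely on the straightening maps and Definition~\ref{defn:sign}).

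For $\poly{T}_k$, I juxtapose a triangle $t$ with a rectangle $r$. Generically the resulting domain decomposes uniquely in the opposite order, as $t' \ast r'$ versus $r \ast t$; the two alternatives each carry a triangle contribution and a rectangle contribution. Since a triangle $t_k$ changes the underlying permutation by a single transposition (moving a point from $\beta_k$ to $\beta_{k+1}$ on one horizontal circle), we have $\sgn(\gen{y}) = -\sgn(\gen{x})$ whenever $\gen{y}$ is joined to $\gen{x}$ by a triangle. Combined with the fact that the sign of a rectangle depends only on $\sigma_\gen{x}$ and $\sigma_\gen{y}$ (the remark after Theorem~\ref{thm:sign}), the two rectangles $r, r'$ in the alternate decompositions are related by a common transposition on either side, so $\sign(r) = \sign(r')$. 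The two sign-weighted contributions therefore become $\sgn(\gen{x}) \sign(r)$ and $-\sgn(\gen{x}) \sign(r')$, which cancel as required.

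For $\poly{P}_k$, I juxtapose a pentagon $p$ with a rectangle $r$. The key trick is to push everything back to a rectangle-rectangle composition via the straightening maps $d_k^\poly{P}$ and $e_k^\poly{P}$. A composite of a pentagon and a rectangle sharing a corner has a unique alternate decomposition as a pentagon and a rectangle; straightening the pentagon in both decompositions produces two rectangle-rectangle decompositions $r_1 \ast r_2 = r_1' \ast r_2'$ of the same auxiliary domain (obtained by appending the relevant canonical triangle $t_{k+1}$ or its partner), to which axiom~(1) of Definition~\ref{defn:sign} applies to give $\sign(r_1)\sign(r_2) = -\sign(r_1')\sign(r_2')$. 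When the alternate decompositions live on opposite sides of the $\beta$-curves --- that is, one pentagon is left and the other is right --- the extra $-1$ appearing in $\epsilon_k^\poly{P}$ for right pentagons corrects a sign introduced by the straightening passing over the distinguished annulus $b$; this is where axiom~(3) for horizontal annuli enters, essentially the same mechanism used in Section~4 of \cite{MOST07}.

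The main obstacle will be the horizontal-annulus case, where the composite $r \ast p$ or $p' \ast r'$ wraps once around the torus horizontally. Here the straightened rectangle-rectangle composition is a horizontal annulus, so axiom~(3) produces a $+1$ rather than the $-1$ produced by axiom~(1) in the generic case; the left/right correction built into $\epsilon_k^\poly{P}$ is designed precisely to convert this into a cancellation. I will assemble the case-by-case verification into a table analogous to Table~\ref{tab:cond_1}, indexed by whether the juxtaposed polygons are disjoint, overlap, share a corner on the same side of the $\beta$-curves, or share a corner on opposite sides, in each case checking that the two sign-weighted contributions sum to zero. Once $\poly{T}_k$ and $\poly{P}_k$ are each shown to be anti-chain, the statement for $f_k = \poly{P}_k + \poly{T}_k$ is immediate.
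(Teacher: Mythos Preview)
Your pentagon argument follows the paper's approach: straighten via $d_k^{\poly P}$ or $e_k^{\poly P}$ and invoke Property~(1) of Definition~\ref{defn:sign}. But you manufacture two complications that do not occur. The two pentagons appearing in alternate decompositions of a pentagon--rectangle composite are always on the \emph{same} side, because left/right is determined by the local picture of the composite domain near the fixed vertex $u_k$, and that picture is independent of how you cut the domain. There is likewise no horizontal-annulus case: Lemma~\ref{lem:chain} already records that pentagon--rectangle composites admit no special cases, so Property~(3) is never invoked in this lemma. The ``main obstacle'' you anticipate is a phantom, and the left/right sign in $\epsilon_k^{\poly P}$ plays no role here (it matters only later, in Lemmas~\ref{lem:cond_1_Z} and \ref{lem:cond_2_Z}).

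Your triangle argument contains a genuine misconception. A triangle in $\Tri_k(\gen{x},\gen{y})$ connects $\gen{x}$ to the canonically closest $\gen{y}$: they agree at $n-1$ points, and the remaining point slides from $\beta_k$ to $\beta_{k+1}$ along the \emph{same} horizontal circle. Under the natural identification of the vertical circles of $G_k$ and $G_{k+1}$ (with $\beta_k$ and $\beta_{k+1}$ occupying the same slot), this gives $\sigma_{\gen x} = \sigma_{\gen y}$; the triangle preserves the permutation and does not act as a transposition. Your justification for $\sign(r)=\sign(r')$ --- ``related by a common transposition on either side'' --- therefore rests on a false premise, and the implication would not follow from the sign-assignment axioms even if the premise held. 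The paper's argument is simpler: since triangles preserve the permutation, $r$ and $r'$ connect the same pair of permutations and hence carry the same sign by the remark after Theorem~\ref{thm:sign}. The sign discrepancy between the two contributions comes entirely from the fact that the \emph{rectangle} changes the signature, i.e.\ $\sgn(\gen w) = -\sgn(\gen x)$ when $\gen w$ is the intermediate generator after the differential.
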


\begin{proof}
The proof follows from the proof of Lemma~\ref{lem:chain}. We first consider 
the juxtaposition of a pentagon and a rectangle. If the two polygons are 
disjoint or have overlapping interiors, then the domain can be decomposed as 
either $r * p$ or $p' * r'$; then by Property~(1) of 
Definition~\ref{defn:sign}, $\sign (r) \cdot \sign (d_k^\poly{P} (p)) = - \sign 
(d_k^\poly{P} (p')) \cdot \sign (r')$, and consequently $\sign (r) \cdot 
\epsilon_k^\poly{P} (p) = - \epsilon_k^\poly{P} (p') \cdot \sign (r')$. If the 
two polygons share a corner, then the domain can be decomposed in two ways; 
straightening the pentagons (with either $d_k^\poly{P}$ or $e_k^\poly{P}$) and 
using Property~(1) of Definition~\ref{defn:sign}, we again see that the terms 
cancel.

Consider now the juxtaposition of a triangle $p$ and a rectangle $r$. Notice 
first that the differential $\bound$ always changes the signature of a 
generator; this means that $\sign (r) \cdot \epsilon_k^\poly{T} (p) = - 
\epsilon_k^\poly{T} (p') \cdot \sign (r')$. By Remark~\ref{rmk:tri_decomp}, 
such a domain can always be decomposed in two ways, one contributing to 
$\bound_{k+1} \comp \poly{T}_k$, and one to $\poly{T}_k \comp \bound_k$; 
moreover, the two rectangles involved correspond to the same permutation, and 
so in fact have the same sign.
\end{proof}

We similarly define the straightening maps $d_k^\poly{H} \colon \eHex_k 
(\gen{x}, \gen{y}) \to \eRect_k (\gen{x}, c_k^+ (\gen{y}))$ and $e_k^\poly{H} 
\colon \eHex_k (\gen{x}, \gen{y}) \to \eRect_{k+2} (c_k^- (\gen{x}), \gen{y})$ 
by sliding the appropriate component, and again notice that $\sign 
(d_k^\poly{H} (p)) = \sign (e_k^\poly{H} (p))$. We define
\[
  \poly{H}_k (\gen{x}) = \sum_{\gen{y} \in \gen{S} (\grid_{k+2})} 
  \sum_{\substack{p \in \eHex_k (\gen{x}, \gen{y})\\ \Int (p) \cap \markers = 
      \eset}} \epsilon_k^\poly{H} (p) \cdot \gen{y} \in C_{k+2},
\]
where
\[
  \epsilon_k^\poly{H} (p) = \sign (d_k^\poly{H} (p)).
\]

For quadrilaterals,
\[
  \poly{Q}_k (\gen{x}) = \sum_{\gen{y} \in \gen{S} (\grid_{k+2})} 
  \sum_{\substack{p \in \Quad_k (\gen{x}, \gen{y})\\ \Int (p) \cap \markers = 
      \eset}} \epsilon_k^\poly{Q} (p) \cdot \gen{y} \in C_{k+2},
\]
where
\[
  \epsilon_k^\poly{Q} (p) = \sgn (\gen{x}).
\]

\begin{lem}
\label{lem:cond_1_Z}
The maps $f_k$ and $\phi_k$ satisfy Condition~(1) of Lemma~\ref{lem:hom_alg}.
\end{lem}

\begin{proof}
Again the proof follows from that of Lemma~\ref{lem:cond_1}. We say that a 
domain $p$ is \emph{rectangle-like} if it is an allowed rectangle, pentagon, 
hexagon or heptagon, and we write $p \in \RL$; we say that it is 
\emph{triangle-like} if it is an allowed triangle or quadrilateral, and we 
write $p \in \TL$. We call a domain $p$ \emph{Type~I} if $p \in \RL * \RL$; 
\emph{Type~II} if $p \in \RL * \TL$, \emph{Type~III} if $p \in \TL * \RL$, and 
\emph{Type~IV} if $p \in \TL * \TL$. (Recall that $*$ and $\comp$ compose in 
the opposite order, so a term in $\poly{P}_{k+1} \comp \poly{T}_k$ is in $\TL * 
\RL$.) Refer to Table~\ref{tab:cond_1}. Typically, a domain can be decomposed 
in two ways; usually, it fits into one of these cases:
\begin{enumerate}
\item Both decompositions are Type~I. In this case, we see that the terms 
  cancel out by straightening the polygons and applying Property~(1) of 
  Definition~\ref{defn:sign};
\item One decomposition is Type~II, and one is Type~III. The domain is a left 
  domain. In this case, the terms cancel out because the two rectangle-like 
  polygons have the same sign, but the two triangle-like polygons have opposite 
  signs;
\item Both decompositions are Type~III. The domain is a right domain. In this 
  case, the two triangle-like polygons have the same sign. However, the two 
  rectangle-like polygons are both right domains, and one is a rectangle while 
  the other is a pentagon. Since right pentagons have opposite signs as the 
  straightened rectangles, the two rectangle-like polygons are of opposite 
  signs.
\end{enumerate}

The only special cases are those that involve two different domains, which are 
exactly the special cases in Lemma~\ref{lem:cond_1}.
\begin{enumerate}
  \item In this case, the two pentagons have the same sign, but the two 
    triangles have opposite signs;
\item In this case, the rectangle and the pentagon have the same sign, but the 
  triangle and the quadrilateral have opposite signs;
\item In one domain, the two triangles have the same sign, and so their 
  composite is positive; in the other domain, the composite of the straightened 
  rectangles is a vertical annulus, and so is negative by Property~(2) of 
  Definition~\ref{defn:sign}.
\end{enumerate}

Since there are no other cases, our proof is complete.
\end{proof}

Now for heptagons, there is only one straightening map $d_k^\poly{K} \colon 
\eHept_k (\gen{x}, \gen{y}) \to \eRect_k (\gen{x}, \gen{y})$. Seeing that the 
only allowed heptagons are right heptagons, we define
\[
  \poly{K}_k (\gen{x}) = \sum_{\gen{y} \in \gen{S} (\grid_k)} \sum_{\substack{p 
      \in \eHept_k (\gen{x}, \gen{y})\\ \Int (p) \cap \markers = \eset}} 
  \epsilon_k^\poly{K} (p) \cdot \gen{y} \in C_k,
\]
where
\[
  \epsilon_k^\poly{K} (p) = - \sign (d_k^\poly{K} (p)).
\]

\begin{lem}
\label{lem:cond_2_Z}
We have
\[
  \phi_{k+1} \comp f_k + f_{k+2} \comp \phi_k + \bound_k \comp \psi_k + \psi_k 
  \comp \bound_k = \id,
\]
so that the maps $f_k$ and $\phi_k$ satisfy Condition~(2) of 
Lemma~\ref{lem:hom_alg}.
\end{lem}

\begin{proof}
The typical cases are as in the proof of Lemma~\ref{lem:cond_1_Z}. We check the 
special cases in Lemma~\ref{lem:cond_2}.
\begin{enumerate}
\item This is actually a typical case; the two decompositions of the domain are 
  both Type~III;
\item In this case, the pentagons have the same sign, but the quadrilaterals 
  have opposite signs;
\item In this case, the pentagon and the hexagon have the same sign, but the 
  triangle and the quadrilateral have opposite signs;
\item Also in this case, the pentagon and the hexagon have the same sign, but 
  the triangle and the quadrilateral have opposite signs.
\end{enumerate}

We now check the decomposition of the identity map. If it is decomposed as a 
triangle and a quadrilateral, we see that they are of the same sign, and so we 
obtain a positive domain. If it is decomposed as a pentagon and a hexagon, we 
see that the composite of the straightened rectangles is negative by 
Property~(2) of Definition~\ref{defn:sign}, but the right pentagon and its 
straightening have opposite signs; this means that the overall domain is also 
positive.
\end{proof}

The proof of Proposition~\ref{prop:exact_Z} is completed by combining 
Lemmas~\ref{lem:hom_alg}, \ref{lem:chain_Z}, \ref{lem:cond_1_Z} and 
\ref{lem:cond_2_Z}.

\section{Iteration of the skein exact triangle}
\label{sec:iterate}

In this section, we will work only over $\F{2}$. Let $\grid_\infty, \grid_0, 
\grid_1$ be grid diagrams that are identical except near a point, as indicated 
in Figure~\ref{fig:grid}.  In Section~\ref{sec:skein}, we constructed the maps 
$f_k \colon \GCt (\grid_k) \to \GCt (\grid_{k+1})$ and $\phi_k \colon \GCt 
(\grid_k) \to \GCt (\grid_{k+2})$ that satisfy Lemma~\ref{lem:hom_alg}.  
Lemma~\ref{lem:hom_alg} and the Five Lemma together imply that $\GCt 
(\grid_\infty)$ is quasi-isomorphic to the mapping cone of $f_0 \colon \GCt 
(\grid_0) \to \GCt (\grid_1)$ (see \cite[Lemma~4.2]{OS05}), where the 
quasi-isomorphism is given by $f_\infty + \phi_\infty \colon \GCt 
(\grid_\infty) \to \GCt (\grid_0) \oplus \GCt (\grid_1)$.

We now wish to iterate this quasi-isomorphism to obtain a cube of resolutions 
that computes the same homology.

\begin{figure}
\includegraphics[width=.45\columnwidth]{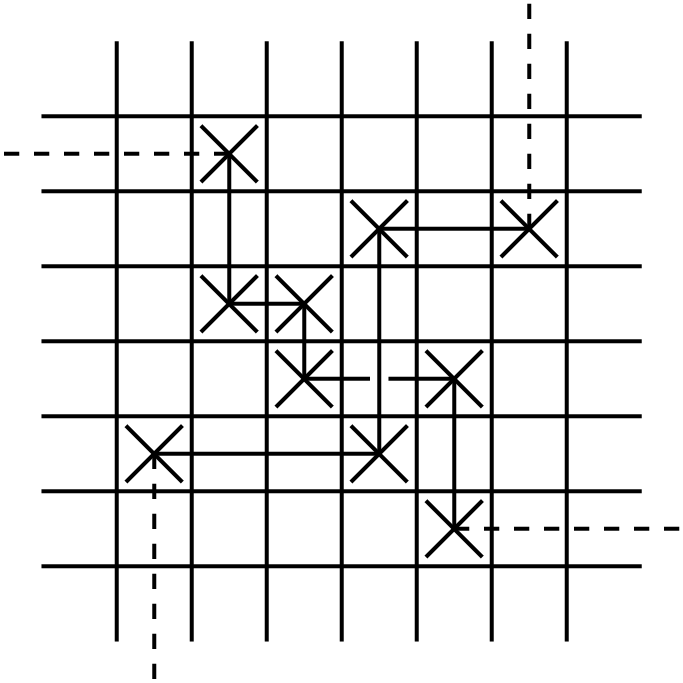}
\caption{The grid diagram $\grid$ of an oriented link $L$ near a crossing. The 
  $6 \times 6$ block of cells in the center is the block associated to this 
  crossing.}
\label{fig:block}
\end{figure}

Let the crossings of a link $L$ be numbered from $1$ to $m$. Start with a 
planar projection of $L$, and convert it into a grid diagram. By applying 
stabilisation and commutation as described in \cite{Cro95, Dyn06, MOST07}, we 
can require that
\begin{enumerate}
\item Near every crossing, the diagram is as indicated in 
  Figure~\ref{fig:block}. For the $i$-th crossing, the associated $6 \times 6$ 
  block of cells illustrated is referred to as the $i$-th block;
\item If $i \neq j$, then the $i$-th block and the $j$-th block occupy disjoint 
  rows and disjoint columns.
\end{enumerate}
Let the resulting grid diagram be $\grid$. To each sequence $k_1, k_2, \dotsc, 
k_m$, where $k_i \in \set{\infty, 0, 1}$ for $1 \leq i \leq m$, we associate a 
grid diagram $\grid_{k_1, \dotsc, k_m}$.  The diagram $\grid_{k_1, \dotsc, 
  k_m}$ is obtained from $\grid$ by replacing the $i$-th block by the 
appropriate $6 \times 6$ block as in Figure~\ref{fig:grid} (where the central 
$4 \times 4$ block is shown), depending on the value of $k_i$. In particular, 
$\grid_{\infty, \dotsc, \infty} = \grid$. Let $C_{k_1, \dotsc, k_m}$ be the 
associated chain complex of $\grid_{k_1, \dotsc, k_m}$, equipped with the 
differential map. For $1 \leq i \leq m$, we let the edge map
\[
  f_{k_1, \dotsc, k_m}^i \colon C_{k_1, \dotsc, k_{i-1}, k_i, k_{i+1}, \dotsc, 
    k_m} \to C_{k_1, \dotsc, k_{i-1}, k_i + 1, k_{i+1}, \dotsc, k_m}
\]
be the map $f_k$ defined in Section~\ref{sec:skein}; this makes sense, since 
the two chain complexes differ only near a crossing.  Analogously, we have the 
map
\[
  \phi_{k_1, \dotsc, k_m}^i \colon C_{k_1, \dotsc, k_{i-1}, k_i, k_{i+1}, 
    \dotsc, k_m} \to C_{k_1, \dotsc, k_{i-1}, k_i + 2, k_{i+1}, \dotsc, k_m},
\]
which is just the map $\phi_k$ defined earlier.

This allows us to define the \emph{big cube of resolutions} of $\grid$ to be 
the complex
\begin{align*}
  (\BCR (\grid), \bound_{\BCR}) & = \left(\bigoplus_{k_i \in \set{\infty, 0, 
        1}} C_{k_1, \dotsc, k_m}, \sum_{k_i \in \set{\infty, 0, 1}} 
  \right(\bound_{k_1, \dotsc, k_m}\\
  & \left. \left. \qquad + \sum_{j \text{ s.t. } k_j \in \set{\infty, 0}} 
      f_{k_1, \dotsc, k_m}^j + \sum_{t \text{ s.t. } k_t = \infty} \phi_{k_1, 
        \dotsc, k_m}^t \right) \right),
\end{align*}
and the \emph{(small) cube of resolutions} of $\grid$ to be the complex
\[
  (\CR (\grid), \bound_{\CR}) = \brac{\bigoplus_{k_i \in \set{0, 1}} C_{k_1, 
      \dotsc, k_m}, \sum_{k_i \in \set{0, 1}} \brac{\bound_{k_1, \dotsc, k_m} + 
      \sum_{j \text{ s.t. } k_j = 0} f_{k_1, \dotsc, k_m}^j}},
\]
which is a subcomplex of $\BCR (\grid)$. In the cube of resolutions $\CR 
(\grid)$, each vertex is associated to a grid diagram in which all crossings 
have been resolved. The case when there are two crossings is illustrated below:
\[
  \xymatrix{
    C_{\infty,\infty} \ar[r]_{f_{\infty,\infty}^2} 
    \ar@/^1pc/[rr]^{\phi_{\infty,\infty}^2}
    & C_{\infty,0} \ar[r]_{f_{\infty,0}^2} \ar[d]^{f_{\infty,0}^1} 
    \ar@/_1pc/[dd]_{\phi_{\infty,0}^1}
    & C_{\infty,1} \ar[d]_{f_{\infty,1}^1} \ar@/^1pc/[dd]^{\phi_{\infty,1}^1}\\
    & C_{0,0} \ar[r]_{f_{0,0}^2} \ar[d]^{f_{0,0}^1}
    & C_{0,1} \ar[d]_{f_{0,1}^1}\\
    & C_{1,0} \ar[r]_{f_{1,0}^2}
    & C_{1,1}
  }
\]
The cube of resolutions $\CR (\grid)$ consists of the $2 \times 2$ square on 
the lower right. The big cube of resolutions $\BCR (\grid)$ consists of the 
whole diagram, together with $C_{0,\infty}, C_{1,\infty}, f_{\infty,\infty}^1, 
f_{0,\infty}^1, f_{0,\infty}^2, f_{1,\infty}^2, \phi_{\infty,\infty}^1, 
\phi_{0,\infty}^2, \phi_{1,\infty}^2$, which are not shown. (These would all be 
at the lower left of the diagram.)

\begin{rmk}
  Neither $\BCR (\grid)$ nor $\CR (\grid)$ contains diagonal maps, e.g.\ a map 
  that goes from $C_{\infty,0}$ to $C_{0,1}$ or $C_{1,1}$. This is in stark 
  contrast with cubes of resolutions in many other contexts. For example, in 
  \cite{OS05}, where the technique of spectral sequences is first applied to 
  Heegaard Floer homology, there are diagonal maps in both the big 
  $\set{\infty, 0, 1}^m$ cube and the small $\set{0, 1}^m$ cube.  Such diagonal 
  maps are needed because the edge maps $f$ and $\phi$ only commute \emph{up to 
    chain homotopy}. Below, Lemma~\ref{lem:iterate} will guarantee that our 
  edge maps commute \emph{on the nose}, allowing us to define the diagonal maps 
  to be zero.
\end{rmk}

In the case with two crossings, observe that $C_{\infty,\infty}$ is 
quasi-isomorphic to the mapping cone of $f_{\infty,0}^2$ via 
$f_{\infty,\infty}^2 + \phi_{\infty,\infty}^2$. The fact that the diagram 
commutes now immediately implies that
\begin{enumerate}
  \item the cube of resolutions $\CR (\grid)$ is a chain complex;
  \item the sum $f_{\infty,0}^1 + f_{\infty,1}^1 + \phi_{\infty,0}^1 + 
    \phi_{\infty,1}^1$ is a chain map from the mapping cone of $f_{\infty,0}^2$ 
    to the cube of resolutions $\CR (\grid)$; and
  \item this chain map is a quasi-isomorphism.
\end{enumerate}
(To see the last statement, observe that if $f \colon C_1 \to C_2$ and $f' 
\colon C_1' \to C_2'$ are quasi-isomorphisms, and if there exist maps $g_1 
\colon C_1 \to C_1'$ and $g_2 \colon C_2 \to C_2'$ so that the diagram
\[
  \xymatrix{
    C_1 \ar[r]^{f} \ar[d]_{g_1} & C_2 \ar[d]^{g_2}\\
               C_1' \ar[r]^{f'} & C_2'
  }
\]
commutes, then $f + f'$ is a quasi-isomorphism between the mapping cone of 
$g_1$ and that of $g_2$.) Thus, we see that $C_{\infty,\infty}$ is 
quasi-isomorphic to $\CR (\grid)$.

The general case is similar; to be precise, our claim is the following.

\begin{prop}
\label{prop:iterate}
The cube of resolutions $\BCR (\grid; \F{2})$ and $\CR (\grid; \F{2})$ are 
indeed chain complexes. Moreover, $\CR (\grid; \F{2})$ is quasi-isomorphic to 
$\GCt (\grid; \F{2})$. Consequently, $\BCR (\grid; \F{2})$ is acyclic.
\end{prop}

As mentioned, the main ingredient in proving Proposition~\ref{prop:iterate} is 
the following lemma.

\begin{lem}
\label{lem:iterate}
All maps involved commute. Precisely,
\begin{align*}
  f_{k_1, \dotsc, k_{i_1} + 1, \dotsc, k_m}^{i_2} \comp f_{k_1, \dotsc, 
    k_{i_1}, \dotsc, k_m}^{i_1} & = f_{k_1, \dotsc, k_{i_2} + 1, \dotsc, 
    k_m}^{i_1} \comp f_{k_1, \dotsc, k_{i_2}, \dotsc, k_m}^{i_2}.\\
  \phi_{k_1, \dotsc, k_{i_1} + 1, \dotsc, k_m}^{i_2} \comp f_{k_1, \dotsc, 
    k_{i_1}, \dotsc, k_m}^{i_1} & = f_{k_1, \dotsc, k_{i_2} + 2, \dotsc, 
    k_m}^{i_1} \comp \phi_{k_1, \dotsc, k_{i_2}, \dotsc, k_m}^{i_2}.
\end{align*}
\end{lem}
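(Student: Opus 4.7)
The plan is to set up a bijection between composite domains appearing on each side of each identity. The key geometric input is that, since the $i_1$-th and $i_2$-th blocks occupy disjoint rows and disjoint columns, the $\beta$-curves $\beta_{k_{i_1}}, \beta_{k_{i_1}+1}$ that distinguish $C_{\dotsc,k_{i_1},\dotsc}$ from $C_{\dotsc,k_{i_1}+1,\dotsc}$ coincide with the standard grid outside block $i_1$, and likewise for $\beta_{k_{i_2}}, \beta_{k_{i_2}+1}$. Consequently, a pentagon or triangle counted in $f^{i_1}$ has its two $\beta$-boundary arcs anchored at the intersection point $u_{k_{i_1}}$ inside block $i_1$, while a polygon counted in $f^{i_2}$ has its $\beta$-anchors inside block $i_2$; these two anchors sit in disjoint rows and columns.

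Given a composite $p_1 * p_2$ counted in $f^{i_2} \comp f^{i_1}$, taking $\gen{x}$ through an intermediate $\gen{y}$ to $\gen{z}$, I would construct the matching summand on the other side as follows. Let $\gen{y}'$ be the unique generator that agrees with $\gen{x}$ on the columns moved by $p_2$ and with $\gen{z}$ on the columns moved by $p_1$. Since the $\beta$-columns of block $i_1$ are disjoint from those of block $i_2$, and the further vertical circles appearing in $p_1$ (in the pentagon case) lie outside block $i_2$'s columns, and vice versa, $\gen{y}'$ is well-defined. Then $p_2$, viewed as a domain from $\gen{x}$ to $\gen{y}'$, is a valid polygon of $f^{i_2}$ of the same type, and $p_1$, viewed from $\gen{y}'$ to $\gen{z}$, is a valid polygon of $f^{i_1}$. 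The emptiness condition $\gen{x} \cap \Int(p_2) = \eset$ transfers to $\gen{y}' \cap \Int(p_1) = \eset$ because $\gen{y}$ and $\gen{y}'$ agree outside the columns of the opposite block, and those columns do not meet $\Int(p_1)$ (respectively $\Int(p_2)$) in a way that changes the count. The second identity of the lemma is handled by the same recipe, with one of $p_1, p_2$ replaced by a hexagon or quadrilateral drawn from the definition of $\phi_k$ in Section~\ref{sec:skein}.

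The main obstacle I anticipate is the case analysis when the "free" vertical or horizontal boundary arcs of $p_1$ happen to coincide with those of $p_2$ outside both blocks, which could a priori give rise to composite domains with anomalous alternative decompositions — analogous to the substitutions involving the annulus $b$ and the triangles $t_k$ that appeared in the proofs of Lemmas~\ref{lem:cond_1} and \ref{lem:cond_2}. The resolution I expect is that the anchoring objects $u_k, v_k, t_k, b$ associated to one block lie entirely within that block and are therefore invisible to the polygons of $f^{i_2}$ (or $f^{i_1}$); so no such special-case substitution is available, and the naive swap-and-reinterpret bijection suffices in every one of the four polygon-type combinations (triangle–triangle, triangle–pentagon, pentagon–triangle, pentagon–pentagon), and in the corresponding mixed combinations with hexagons and quadrilaterals for the second identity.
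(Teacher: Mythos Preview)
Your strategy---pairing composite domains on each side via a swap of intermediate generators---is the right one, and your disposal of the $b$- and $t_k$-substitutions is correct: those objects are local to their block and play no role here. But the argument as written has a gap in the shared-corner case.

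Your construction of $\gen{y}'$ presumes that the columns (and rows) moved by $p_1$ and by $p_2$ are disjoint, so that the composite decomposes as $p_2 * p_1$ with literally the same polygons in the other order. This covers the disjoint and overlapping-interior cases. But two rectangle-like polygons---say a pentagon for block $i_1$ and a pentagon for block $i_2$---can share a corner at a point lying on a common ``free'' vertical and horizontal circle, away from both blocks. In that case the composite domain is an $L$-shape whose alternative decomposition $p_1' * p_2'$ involves genuinely different polygons, not a reordering; you must then check that $p_1'$ is still a polygon for block~$i_1$ and $p_2'$ still for block~$i_2$, which your swap recipe does not supply.

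The paper closes this with a geometric observation you do not invoke: the $X$ markers placed near the four corners of each $6 \times 6$ block force every allowed polygon for block~$i$ to stay within either the rows or the columns spanned by block~$i$. Consequently, when two rectangle-like polygons from different blocks share a corner, one must be long and horizontal (confined to its block's rows) and the other long and vertical (confined to its block's columns); the alternative cut of the resulting $L$-shape again yields one horizontal and one vertical piece, each anchored in the correct block. The same confinement shows that a triangle-like polygon, which lives entirely inside its block, can meet a polygon from another block only disjointly or with overlapping interiors---never sharing a corner. With this observation in hand, your bijection goes through in every case.
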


\begin{proof}
Inspecting the $6 \times 6$ block in Figure~\ref{fig:block}, we see that there 
is an $X$ near each corner of the block, and so each allowed polygon defined in 
Section~\ref{sec:skein} counted in a chain map or a chain homotopy can only 
leave the block either horizontally or vertically. If it leaves the block 
horizontally, then it is contained in the rows that the block spans; if it 
leaves the block vertically, then it is contained in the columns that the block 
spans. This shows that if two polygons from two different crossings share a 
corner, one must be long and horizontal, and the other long and vertical; in 
such cases, the composite domain always has an obvious alternative 
decomposition. The cases where the two polygons are disjoint or have 
overlapping interiors are obvious.
\end{proof}

\begin{proof}[Proof of Proposition~\ref{prop:iterate}]
Exactly as in the case with two crossings, Lemma~\ref{lem:iterate} implies that 
the cube of resolutions is indeed a chain complex, and that all appropriate 
maps are chain maps. We proceed by induction; at each step, we claim that the 
chain complexes
\begin{align*}
  & \left(\bigoplus_{k_i \in \set{0, 1}} C_{\infty, \dotsc, \infty, k_{t+1}, 
      \dotsc, k_m}, \right.\\
  & \quad \left.\sum_{k_i \in \set{0, 1}} \left(\bound_{\infty, \dotsc, \infty, 
        k_{t+1}, \dotsc, k_m} + \sum_{j \text{ s.t. } k_j = 0} f_{\infty, 
        \dotsc, \infty, k_{t+1}, \dotsc, k_m}^j\right)\right)
\end{align*}
and
\[
  \brac{\bigoplus_{k_i \in \set{0, 1}} C_{\infty, \dotsc, \infty, k_t, \dotsc, 
      k_m}, \sum_{k_i \in \set{0, 1}} \brac{\bound_{\infty, \dotsc, \infty, 
        k_t, \dotsc, k_m} + \sum_{j \text{ s.t. } k_j = 0} f_{\infty, \dotsc, 
        \infty, k_t, \dotsc, k_m}^j}}
\]
are quasi-isomorphic. The quasi-isomorphism is given by
\[
  \sum_{k_i \in \set{0, 1}} f_{\infty, \dotsc, \infty, k_{t+1}, \dotsc, k_m}^t 
  + \sum_{k_i \in \set{0, 1}} \phi_{\infty, \dotsc, \infty, k_{t+1}, \dotsc, 
    k_m}^t.
\]
This map is a quasi-isomorphism by the induction hypothesis and by the comment 
after (3) in the case with two crossings above.
\end{proof}

Corollary~\ref{cor:main} follows from Proposition~\ref{prop:iterate}.

\begin{rmk}
  \label{rmk:iterateZ}
  Over $\Z$, for the cube of resolutions to be a chain complex, the analogue of 
  Lemma~\ref{lem:iterate} over $\Z$ should presumably state that the maps 
  involved \emph{anti-commute}. However, if we denote by $\rho^i$ (resp.\ 
  $\tau^i$) the maps defined by the rectangle-like (resp.\ triangle-like) 
  polygons associated to the $i$-th crossing following the definitions in 
  Section~\ref{sec:signs}, then we have
  \begin{enumerate}
    \item $\rho^2 \circ \rho^1 = - \rho^1 \circ \rho^2$;
    \item $\rho^2 \circ \tau^1 = - \tau^1 \circ \rho^2$;
    \item $\tau^2 \circ \rho^1 = - \rho^1 \circ \tau^2$;
    \item $\tau^2 \circ \tau^1 = \tau^1 \circ \tau^2$.
  \end{enumerate}
  Since \emph{only} the maps defined by triangle-like polygons commute, the 
  author has thus far been unable to prove a version of 
  Lemma~\ref{lem:iterate}, and therefore Proposition~\ref{prop:iterate}, over 
  $\Z$.
\end{rmk}

\section{Quasi-alternating links}
\label{sec:quasi}

We now describe an application of the skein exact triangle. To begin, we must 
first define the $\delta$-grading on knot Floer homology. The $\delta$-grading 
is closely related to the Maslov and Alexander gradings; in view of that, in 
this section we revert to the traditional notation with both $O$'s and $X$'s, 
and denote the set of $O$'s by $\markersO$ and that of $X$'s by $\markers$.

The following formulation is found in \cite{MOST07}. Given two collections $A$ 
and $B$ of finitely many points in the plane, let
\begin{align*}
  \countJ (A, B) = & \frac{1}{2} \# \setc{((a_1, a_2), (b_1, b_2)) \in A \times 
    B}{a_1 < b_1 \text{ and } a_2 < b_2}\\
  & + \frac{1}{2} \# \setc{(a_1, a_2), (b_1, b_2)) \in A \times B}{b_1 < a_1 
    \text{ and } b_2 < a_2}.
\end{align*}
Treating $\gen{x} \in \gen{S} (\grid)$ as a collection of points with integer 
coordinates in a fundamental domain for $\torus$, and similarly $\markersO$ and 
$\markers$ as collections of points in the plane with half-integer coordinates, 
the Maslov grading of a generator is given by
\[
  M (\gen{x}) = \countJ (\gen{x}, \gen{x}) - 2 \countJ (\gen{x}, \markersO) + 
  \countJ (\markersO, \markersO) + 1,
\]
while the Alexander grading is given by
\[
  A (\gen{x}) = \countJ (\gen{x}, \markers) - \countJ (\gen{x}, \markersO) - 
  \frac{1}{2} \countJ (\markers, \markers) + \frac{1}{2} \countJ (\markersO, 
  \markersO) - \frac{n - 1}{2},
\]
where $n$ is the size of the grid diagram. Now the $\delta$-grading is just
\[
  \delta (\gen{x}) = A (\gen{x}) - M (\gen{x});
\]
in other words, we have
\[
  \delta (\gen{x}) = - \countJ (\gen{x}, \gen{x}) + \countJ (\gen{x}, \markers) 
  + \countJ (\gen{x}, \markersO) - \frac{1}{2} \countJ (\markers, \markers) - 
  \frac{1}{2} \countJ (\markersO, \markersO) - \frac{n + 1}{2}.
\]
These gradings do not depend on the choice of the fundamental domain; moreover, 
they agree with the original definitions in terms of pseudo-holomorphic 
representatives.

It has been observed that, for many classes of links, the knot Floer homology 
over $R = \F{2}$ or $\Z$ is a free $R$-module supported in only one 
$\delta$-grading, which motivates the following definition \cite{Ras03, Ras05, 
  MO08}:

\begin{defn}
Let $R = \F{2}$ or $\Z$. A link $L$ is \emph{Floer-homologically thin} over $R$ 
if $\HFKh (L; R)$ is a free $R$-module supported in only one $\delta$-grading.  
If in addition the $\delta$-grading equals $- \sigma (L) / 2$, where $\sigma 
(L)$ is the signature of the link, then we say that $L$ is 
\emph{Floer-homologically $\sigma$-thin}.
\end{defn}

In \cite{MO08}, Manolescu and Ozsv\'{a}th show that a class of links, which is 
a natural generalisation of alternating knots, has the homologically 
$\sigma$-thin property over $\F{2}$. Precisely, we recall the following 
definition from \cite{OS05}:

\begin{defn}
The set of \emph{quasi-alternating links} $\quasi$ is the smallest set of links 
satisfying the following properties:
\begin{enumerate}
  \item The unknot is in $\quasi$.
  \item If $L_\infty$ is a link that admits a projection with a crossing such 
    that
    \begin{enumerate}
      \item both resolutions $L_0$ and $L_1$ at that crossing are in $\quasi$; 
        and
      \item $\det (L_\infty) = \det (L_0) + \det (L_1)$,
    \end{enumerate}
    then $L_\infty$ is in $\quasi$.
\end{enumerate}
\end{defn}

The result of Manolescu and Ozsv\'{a}th is then the following statement:

\begin{thm}[Manolescu--Ozsv\'{a}th]
\label{thm:thin}
Quasi-alternating links are Floer-ho\-mo\-log\-i\-cal\-ly $\sigma$-thin over 
$\F{2}$.
\end{thm}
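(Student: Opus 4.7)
The plan is to run a strong induction on $\det(L)$ using the recursive definition of $\quasi$. The base case is the unknot, for which $\HFGh(\text{unknot}) \cong \F{2}$ is concentrated in bigrading $(0,0)$ and hence in $\delta$-grading $0 = -\sigma(\text{unknot})/2$, so it is trivially $\sigma$-thin. For the inductive step I take a quasi-alternating link $L_\infty$ with resolutions $L_0, L_1 \in \quasi$ at a crossing satisfying $\det(L_\infty) = \det(L_0) + \det(L_1)$, and observe that both $L_0$ and $L_1$ must then have strictly smaller determinants, so by induction each $\HFGh(L_j)$ is free over $\F{2}$, of rank $2^{\ell_j-1}\det(L_j)$, and supported in the single $\delta$-grading $-\sigma(L_j)/2$. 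Tensoring with $V^{m-\ell_j}$ preserves the $\delta$-support because $V$ is concentrated in $\delta=0$, giving a total rank of $2^{m-1}\det(L_j)$ in that one $\delta$-grading.

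The technical core is the unoriented skein exact triangle from Proposition~\ref{prop:exact},
\[
\dotsb \to \HFGh(L_\infty) \otimes V^{m-\ell_\infty} \to \HFGh(L_0) \otimes V^{m-\ell_0} \to \HFGh(L_1) \otimes V^{m-\ell_1} \to \dotsb,
\]
and what I need next is a precise computation of the $\delta$-grading behaviour of its three maps. To obtain this, I would first fix reference generators on each of $G_0, G_1, G_2$ that are pointwise closest to one another (in the spirit of the $c_k^\pm$ of Section~\ref{sec:signs}) and compute their $\delta$-values directly from the $\countJ$ formula, thereby fixing the absolute $\delta$-shifts between the three chain complexes. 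Then I would compute the $\delta$-degree contributed by an empty pentagon, triangle, hexagon, or quadrilateral in the local model of Figure~\ref{fig:grid}, using that each such polygon changes $\countJ(\gen{x},\gen{x})$, $\countJ(\gen{x},\markers)$, and $\countJ(\gen{x},\markersO)$ in a universally prescribed way; the output should match the $\delta$-shifts Manolescu extracts in \cite{Man07}.

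The third ingredient is the signature jump at a quasi-alternating crossing, established by Manolescu--Ozsv\'ath: after fixing orientations, $\sigma(L_0)$ and $\sigma(L_1)$ differ by exactly one, and $\sigma(L_\infty)$ equals whichever of them is dictated by the crossing sign. Combined with the previous step this forces the map $\HFGh(L_0) \otimes V^{m-\ell_0} \to \HFGh(L_1)\otimes V^{m-\ell_1}$ to land in the wrong $\delta$-grading and so to vanish identically, whence the exact triangle collapses to the short exact sequence
\[
0 \to \HFGh(L_1) \otimes V^{m-\ell_1} \to \HFGh(L_\infty) \otimes V^{m-\ell_\infty} \to \HFGh(L_0) \otimes V^{m-\ell_0} \to 0.
\]
The outer terms sit in the single $\delta$-gradings $-\sigma(L_1)/2$ and $-\sigma(L_0)/2$, and the explicit $\delta$-degree of the connecting map (from the first step) conspires with the signature jump to place $\HFGh(L_\infty) \otimes V^{m-\ell_\infty}$ entirely in the single $\delta$-grading $-\sigma(L_\infty)/2$. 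Additivity of ranks then gives $\operatorname{rk}_{\F{2}} \HFGh(L_\infty) = 2^{\ell_\infty-1}(\det(L_0)+\det(L_1)) = 2^{\ell_\infty-1}\det(L_\infty)$, completing the induction.

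The main obstacle will be the first technical step: pinning down the exact $\delta$-grading shifts of the pentagon, triangle, hexagon, and quadrilateral maps. This is a $\countJ$-bookkeeping exercise, but it is delicate because it forces us to revert from the $X$-only convention used in Sections~\ref{sec:skein}--\ref{sec:signs} back to the full $(\markersO,\markers)$ convention required to define $\delta$, and to track carefully the redistribution of $O$'s and $X$'s across $G_0, G_1, G_2$. Once those shifts are computed, the vanishing argument from the signature jump and the rank count from $\det(L_\infty) = \det(L_0) + \det(L_1)$ do the rest of the work mechanically.
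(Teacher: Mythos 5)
Your proposal follows essentially the same route as the paper: fix the $\delta$-grading shifts of the three maps in the skein triangle, invoke Murasugi-type signature relations at a quasi-alternating crossing, and observe that two of the three links being $\sigma$-thin by induction forces one of the maps in the (degree-$+1$-cycling) long exact sequence to vanish, collapsing the triangle to a short exact sequence from which $\sigma$-thinness of $L_\infty$ and the rank count both follow. Your explicit induction on $\det(L)$ rather than on the recursive structure of $\quasi$ is a legitimate variant.

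One factual point needs correction. You assert that ``$\sigma(L_0)$ and $\sigma(L_1)$ differ by exactly one, and $\sigma(L_\infty)$ equals whichever of them is dictated by the crossing sign.'' This is not what the signature lemma gives. In the paper's notation (Lemma~\ref{lem:sigma}), under the quasi-alternating hypothesis with a positive crossing one has $\sigma(L_v) - \sigma(L_+) = 1$ and $\sigma(L_h) - \sigma(L_+) = e$, where $e$ is the change in the number of negative crossings between $D_+$ and $D_h$; so $\sigma(L_v) - \sigma(L_h) = 1 - e$, which need not be $\pm 1$, and $\sigma(L_\infty)$ is not equal to either $\sigma(L_0)$ or $\sigma(L_1)$. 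What actually makes the vanishing work is that the $\delta$-shifts of the three maps add up to $+1$ around the triangle, and after normalising by $-\sigma(L_j)/2$ (which is precisely where Lemma~\ref{lem:sigma} enters, matching the $\delta$-shifts to the signature differences) one finds that the map $\HFGh(L_0) \otimes V^{m-\ell_0} \to \HFGh(L_1) \otimes V^{m-\ell_1}$ connects a source concentrated in one normalised grading to a target concentrated one step away, hence must vanish.

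On the technical core: rather than fixing reference generators and computing absolute $\delta$-values via $\countJ$, the paper tracks the $\delta$-shift incrementally by decomposing $f_k$ into elementary moves (Lemma~\ref{lem:delta}): the boundary map shifts $\delta$ by $+1$, moving a pair of markers across a $\beta$-curve shifts $\delta$ by $\pm 1/2$ depending on the position of $\gen{x}$, and reversing the orientation of a link component shifts $\delta$ by $-\epsilon/2$. One also has to handle separately the cases where the two strands at the crossing belong to the same or to different components of $L_\infty$ (this is where the bottom row of Figure~\ref{fig:delta2} appears), a case split your sketch does not mention but which you will hit when carrying out the bookkeeping. Your approach via reference generators and the $\countJ$ formula should produce the same shifts but is more fragile, since you need to recompute from scratch for each of the several local diagrams rather than reusing a small number of universal move-types.
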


The proof of the theorem is essentially an application of Manolescu's 
unoriented skein exact triangle; the main work is in tracking the changes in 
the $\delta$-grading of the maps involved. The same idea works in the current 
context as well: With grid diagrams, we may similarly track the changes in the 
$\delta$-grading, defined by the formula above. Of course, in our case, we will 
be proving the result for $\HFGh$ of a link instead of $\HFKh$, as in 
Theorem~\ref{thm:delta_sigma}. From this, we will obtain 
Theorem~\ref{thm:thinZ}, a strengthened version of Theorem~\ref{thm:thin}.

\begin{figure}
  \resizebox{\textwidth}{!}{\input{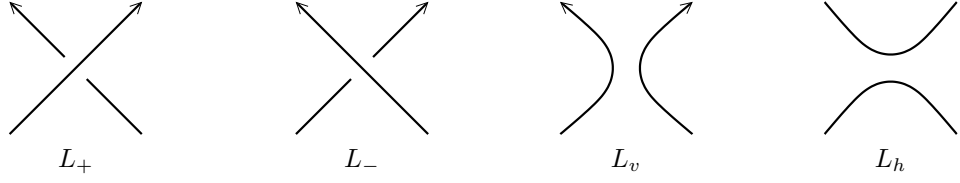}}
  \caption{$L_+, L_-, L_h$ and $L_v$ near a point.}
  \label{fig:links2}
\end{figure}

To begin, fix a crossing $c_0$ in the planar diagram of a link $L_\infty$. Let 
$L_+$ be the link with a positive crossing at $c_0$, and $L_-$ the link with a 
negative crossing; then either $L_\infty = L_+$ or $L_\infty = L_-$. Let $L_h$ 
and $L_v$ be the unoriented and oriented resolutions of $L_\infty$ at $c_0$ 
respectively, and choose an arbitrary orientation for $L_h$. This is 
illustrated in Figure~\ref{fig:links2}. Comparing with Figure~\ref{fig:links},  
if $L_\infty = L_+$, then $L_0 = L_h$ and $L_1 = L_v$; if instead $L_\infty = 
L_-$, then $L_0 = L_v$ and $L_1 = L_h$.

Denote by $D_+, D_-, D_v, D_h$ the planar diagrams of $L_+, L_-, L_v, L_h$, 
differing from each other only at $c_0$.  The following lemma is used in 
\cite{MO08}; for $L_+$, the first equality is proven by Murasugi in 
\cite{Mur65}, while the second is also inspired by a result of Murasugi from 
\cite{Mur70}. The equalities for $L_-$ are similarly obtained.

\begin{lem}
\label{lem:sigma}
Suppose that $\det (L_v), \det (L_h) > 0$. Let $e$ denotes the difference 
between the number of negative crossings in $D_h$ and the number of such 
crossings in $D_+$. If $\det (L_+) = \det (L_v) + \det (L_h)$, then
\begin{enumerate}
  \item $\sigma (L_v) - \sigma (L_+) = 1$; and
  \item $\sigma (L_h) - \sigma (L_+) = e$,
\end{enumerate}
If $\det (L_-) = \det (L_v) + \det (L_h)$, then
\begin{enumerate}
  \item $\sigma (L_v) - \sigma (L_-) = - 1$; and
  \item $\sigma (L_h) - \sigma (L_-) = e$.
\end{enumerate}
\end{lem}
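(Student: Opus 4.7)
The proof plan is largely bibliographic: this lemma packages known Murasugi-type signature bounds into the precise statement needed for the thinness argument that follows. First, for the case $L_\infty = L_+$, I would cite Murasugi's classical signature inequality from \cite{Mur65}, namely that $\sigma(L_v) - \sigma(L_+) \in \{0, 1\}$ with value exactly $1$ precisely when $\det(L_+) = \det(L_v) + \det(L_h)$. Since the determinant condition is assumed, part (1) of the $L_+$ case follows immediately. For part (2), I would invoke the 1970 formula of Murasugi from \cite{Mur70}, which compares $\sigma(L_+)$ and $\sigma(L_h)$ via a signature-and-writhe identity; after substituting the determinant equality to kill a potential correction term, this identity collapses to $\sigma(L_h) - \sigma(L_+) = e$, where $e$ precisely captures the crossing-sign shift caused by having to reorient the smoothed diagram $D_h$.

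For the $L_-$ case, the cleanest route is via mirror reflection. Let $\bar{L}$ denote the mirror of $L$; then $\sigma(\bar{L}) = -\sigma(L)$ while $\det(\bar{L}) = \det(L)$. Mirroring the planar diagram $D_-$ sends the negative crossing at $c_0$ to a positive one, so we may apply the $L_+$ case to the mirror diagram, whose associated links are the mirrors of $L_v$ and $L_h$, and the determinant equality is preserved. Negating all signatures on both sides then yields $\sigma(L_v) - \sigma(L_-) = -1$ and $\sigma(L_h) - \sigma(L_-) = e$: the sign on the first equality flips, while $e$ is unchanged, because mirroring negates \emph{all} crossing signs uniformly, so the \emph{difference} in negative crossings between $D_h$ and $D_-$ matches that between their mirrors. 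Alternatively, one may invoke the negative-crossing analogues of the two Murasugi statements directly.

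The main obstacle is the careful bookkeeping of sign and orientation conventions, especially the treatment of $e$. The diagram $D_h$ is endowed with an arbitrary orientation (no canonical one is inherited from the unoriented resolution), and changing this orientation on connected components can flip the signs of some crossings elsewhere in the diagram. One must confirm that both $e$ as defined in the lemma and Murasugi's 1970 identity are insensitive to this choice, or compensate for it in a compatible way. A secondary difficulty is that Murasugi's results are sometimes stated for knots via Seifert matrices; verifying that the signature conventions transfer to multi-component links without spurious sign flips is routine but should be done explicitly.
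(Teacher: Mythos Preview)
Your proposal is correct and matches the paper's treatment: the paper does not prove this lemma at all but simply attributes the $L_+$ identities to Murasugi \cite{Mur65, Mur70} (via \cite{MO08}) and states that ``the equalities for $L_-$ are similarly obtained,'' which your mirror argument makes explicit. One small caution: your one-line justification that mirroring preserves the relevant crossing-count difference is not literally correct, since $e$ is defined relative to $D_+$ (not $D_-$) and $D_h$ has one fewer crossing; the off-by-one terms do cancel to give $-\bar e = e$, but this is exactly the bookkeeping you flagged as needing care.
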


Now we investigate the changes in the $\delta$-grading in the maps $f_k$ 
defined in Sections~\ref{sec:skein} and \ref{sec:signs}.

Given two grid diagrams $\grid, \grid'$ of the same size $n$, we can think of 
them as being on the same grid (i.e.\ consisting of the same horizontal and 
vertical circles) but having different $O$'s and $X$'s; therefore, we can write 
$\grid = (n, \markersO, \markers)$ and $\grid' = (n, \markersO', \markers')$.  
This allows us to identify $\gen{S} (\grid)$ and $\gen{S} (\grid')$, by viewing 
each generator $\gen{x} \in \gen{S} (\grid)$ as the permutation 
$\sigma_\gen{x}$ referred to in Remark~\ref{rmk:sign}. In this point of view, 
we will write $\delta_\grid (\gen{x})$ and $\delta_{\grid'} (\gen{x})$ to 
denote the gradings defined by applying the $\delta$-grading formula to 
$(\markersO, \markers)$ and $(\markersO', \markers')$ respectively.

In particular, for the rest of this section, we will view $\grid_\infty, 
\grid_0, \grid_1$ in this manner.

\begin{lem}
\label{lem:delta}
Let $\gen{x} \in \gen{S} (\grid)$ be a fixed generator in a grid diagram $\grid 
= (n, \markersO, \markers)$.  Then the following is true:
\begin{enumerate}
  \item If there is an empty rectangle $r$ from $\gen{x}$ to $\gen{y}$, 
    possibly containing $O$'s and $X$'s, then $\delta_{\grid} (\gen{y}) = 
    \delta_{\grid} (\gen{x}) + 1 - \# (\Int (r) \cap (\markersO \cup 
    \markers))$.
  \item Suppose $\grid' = (n, \markersO', \markers)$ is a grid diagram 
    identical to $\grid$ except in two adjacent columns, where the horizontal 
    positions of a pair of $O$ markers are interchanged, as in 
    Figure~\ref{fig:markers}. Then
    \begin{enumerate}
      \item $\delta_{\grid'} (\gen{x}) - \delta_{\grid} (\gen{x}) = -1/2$ if, 
        in $\grid$, the component of $\gen{x}$ on the vertical circle between 
        the markers lies to the northeast of one marker and to the southwest of 
        the other (the component of $\gen{x}$ indicated in the diagrams by a 
        solid point); and
      \item $\delta_{\grid'} (\gen{x}) - \delta_{\grid} (\gen{x}) = 1/2$ 
        otherwise (the component of $\gen{x}$ indicated by a hollow point).
    \end{enumerate}
    The same statement holds if, instead, the horizontal positions of a pair of 
    $X$ markers are interchanged, with $\grid' = (n, \markersO, \markers')$.
  \item If the diagram $\grid'$ is obtained from $\grid$ by reversing the 
    orientation of a component $K$ of $L$, then $\delta_{\grid'} (\gen{x}) - 
    \delta_{\grid} (\gen{x}) = -\epsilon/2$, where $\epsilon$ is the difference 
    between the number of negative crossings in $\grid'$ and the number of such 
    crossings in $\grid$.
\end{enumerate}
\end{lem}

\begin{figure}
  \resizebox{0.5\textwidth}{!}{\input{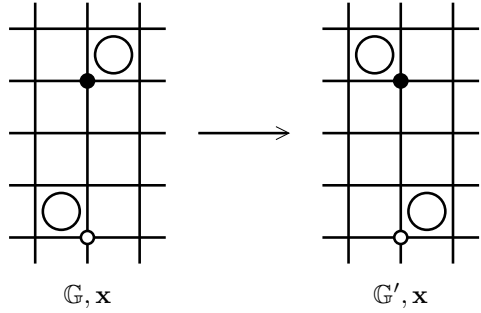}}
  \caption{Moving two markers across a vertical circle. Observe that before the 
    move, the solid point lies to the northeast of the $O$ and to the southwest 
    of the $X$, thus forming two southwest--northeast pairs with the markers 
    that are destroyed in the move. The hollow point forms the same number of 
    pairs with the markers before and after the move.}
  \label{fig:markers}
\end{figure}

\begin{proof}
Recall that
\[
  \delta (\gen{x}) = - \countJ (\gen{x}, \gen{x}) + \countJ (\gen{x}, \markers) 
  + \countJ (\gen{x}, \markersO) - \frac{1}{2} \countJ (\markers, \markers) - 
  \frac{1}{2} \countJ (\markersO, \markersO) - \frac{n + 1}{2}.
\]
Observe that $\countJ (A, B)$ is the number of southwest--northeast pairs 
between $A$ and $B$, divided by two. Consider the following:
\begin{enumerate}
  \item Observe that the last three terms in $\delta (\gen{x})$ and $\delta 
    (\gen{y})$ are identical. Switching from $\gen{x}$ to $\gen{y}$ destroys 
    exactly one southwest--northeast pair, which is counted twice and 
    contributes $+1$ to $\countJ (\gen{x}, \gen{x})$. For every $O$ or $X$ 
    inside $\Int (r)$, switching from $\gen{x}$ to $\gen{y}$ destroys two 
    southwest--northeast pairs, each counted once, that contribute $+1$ to 
    $\countJ (\gen{x}, \markers) + \countJ (\gen{x}, \markersO)$.
  \item Consider the first case, where $\grid = (n, \markersO, \markers)$ and 
    $\grid' = (n, \markersO', \markers)$, as in Figure~\ref{fig:markers}. We 
    first see that $\countJ (\markersO', \markersO') - \countJ (\markersO, 
    \markersO) = - 1$.  If the component of $\gen{x}$ on the vertical circle 
    lies to the northeast of one marker and to the southwest of the other, then 
    $\countJ (\gen{x}, \markersO') - \countJ (\gen{x}, \markersO) = -1$; 
    otherwise, this quantity is $0$. All other terms in $\delta_{\grid} 
    (\gen{x})$ and $\delta_{\grid'} (\gen{x})$ are identical. The second case 
    is entirely analogous.
  \item This is simply a restatement of \cite[Proposition~5.4]{MOST07}.
\end{enumerate}
This completes the proof of the lemma.
\end{proof}

The following is an easy consequence of Lemma~\ref{lem:delta}~(1) and (2):

\begin{lem}
  \label{lem:orient_delta}
  Suppose $\grid_k$ and $\grid_{k+1}$ are grid presentations of two links with 
  compatible orientations (so that $\grid_k$ and $\grid_{k+1}$ have compatible 
  sets of $O$'s), and $f_k \colon \grid_k \to \grid_{k+1}$ is the map defined 
  in Section~\ref{sec:skein} and Section~\ref{sec:signs}. If $\gen{x} \in 
  \gen{S} (\grid_k)$, and $\gen{y} \in \gen{S} (\grid_{k+1})$ appears in $f_k 
  (\gen{x})$, then $\delta_{\grid_{k+1}} (\gen{y}) - \delta_{\grid_k} (\gen{x}) 
  = +1/2$.
\end{lem}

\begin{figure}
  \resizebox{0.833\textwidth}{!}{\input{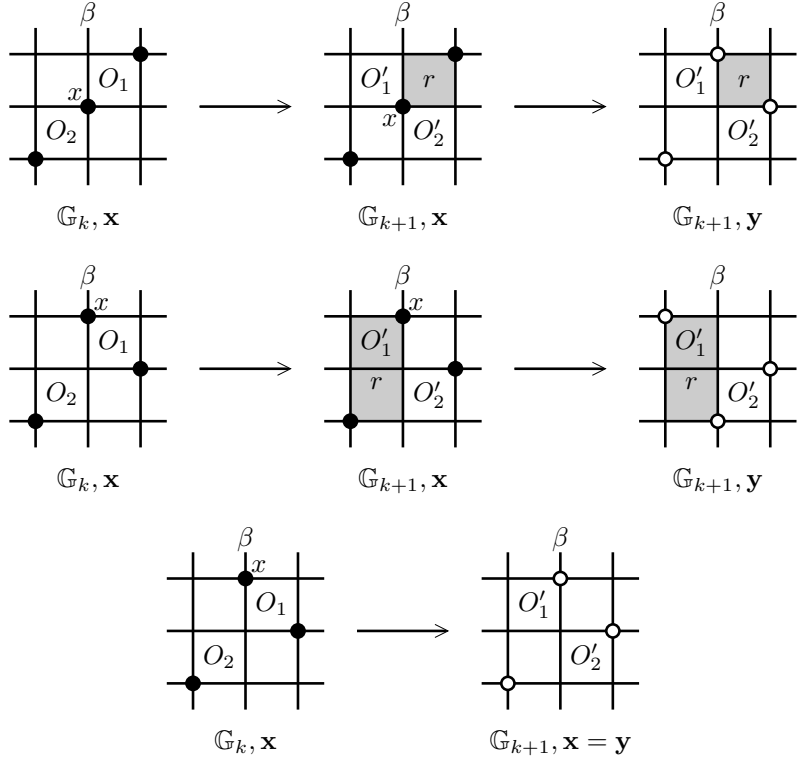}}
  \caption{Computing the $\delta$-grading change under $f_k$ when $\grid_k$ and 
    $\grid_{k+1}$ have compatible orientations. The top and middle rows show 
    the case when the polygon $p$ being counted is a pentagon, while the bottom 
    row shows the case when $p$ is a triangle. In both cases, $\gen{y}$ appears 
    in $f_k (\gen{x})$. Note that there may be multiple horizontal circles 
    between the $O$'s, which are omitted for the sake of simplicity, in each of 
    the figures above.}
  \label{fig:orient_delta}
\end{figure}

\begin{proof}
  Note first that the grid diagrams $\grid_k = (n, \markersO, \markers)$ and 
  $\grid_{k+1} = (n, \markersO', \markers')$ are related as in 
  Lemma~\ref{lem:delta}~(2).  Refer to Figure~\ref{fig:orient_delta}. Without 
  loss of generality, assume that the two relevant markers are $O$'s. (The case 
  where they are $X$'s is completely analogous.) Label the two markers in 
  $\markersO$ by $O_1$ and $O_2$, such that $O_1$ lies to the northeast of 
  $O_2$; similarly, label the markers in $\markersO'$ by $O_1'$ and $O_2'$, 
  such that $O_1'$ lies to the northwest of $O_2'$. In the current framework, 
  $\beta_\infty, \beta_0$ and $\beta_1$ are all represented by the vertical 
  circle $\beta$ in the middle. Let $x$ be the $\beta$-component of $\gen{x}$.  
  
  Suppose $\gen{y} \in \gen{S} (\grid_{k+1})$ appears in $f_k (\gen{x})$. Then
  \[
    \delta_{\grid_{k+1}} (\gen{y}) - \delta_{\grid_k} (\gen{x}) = 
    (\delta_{\grid_{k+1}} (\gen{x}) - \delta_{\grid_k} (\gen{x})) + 
    (\delta_{\grid_{k+1}} (\gen{y}) - \delta_{\grid_{k+1}} (\gen{x})).
  \]
  There are two cases, as follows.
  
  Suppose there is an empty pentagon $p$ from $\gen{x}$ to $\gen{y}$; in the 
  framework described in this section, it can in fact be viewed as an empty 
  rectangle $r$ from $\gen{x}$ to $\gen{y}$ in $\grid_{k+1}$. Observe that the 
  horizontal circles divide $\torus$ into a number of components; let $A$ be 
  the unique component containing both $O_1$ in $\grid_k$ and $O_1'$ in 
  $\grid_{k+1}$. Since the boundary of $p$ contains $u_k$ (cf.\ 
  Figure~\ref{fig:one_grid}), which lies in $A$, the boundary of $r$ must 
  contain $\beta \cap A$. Therefore, one of two scenarios must be true:
  \begin{enumerate}
    \item The point $x$ lies to the northeast of $O_2$ and to the southwest of 
      $O_1$. Then $\Int (r) \cap (\markersO \cup \markers) = \eset$. By 
      Lemma~\ref{lem:delta}~(2)(a), $\delta_{\grid_{k+1}} (\gen{x}) - 
      \delta_{\grid_k} (\gen{x}) = -1/2$, and by Lemma~\ref{lem:delta}~(1), 
      $\delta_{\grid_{k+1}} (\gen{y}) - \delta_{\grid_{k+1}} (\gen{x}) = 1$.  
      This is illustrated by the top row of Figure~\ref{fig:orient_delta}.
    \item The point $x$ lies elsewhere. Then $\Int (r) \cap (\markersO \cup 
      \markers)$ is either $\set{O_1'}$ or $\set{O_2'}$. By 
      Lemma~\ref{lem:delta}~(2)(b), $\delta_{\grid_{k+1}} (\gen{x}) - 
      \delta_{\grid_k} (\gen{x}) = +1/2$, and by Lemma~\ref{lem:delta}~(1), 
      $\delta_{\grid_{k+1}} (\gen{y}) - \delta_{\grid_{k+1}} (\gen{x}) = 1-1 = 
      0$. This is illustrated by the middle row of 
      Figure~\ref{fig:orient_delta}.
  \end{enumerate}
  Anyway, we get that $\delta_{\grid_{k+1}} (\gen{y}) - \delta_{\grid_k} 
  (\gen{x}) = +1/2$.

  Suppose there is a triangle $p$ from $\gen{x}$ to $\gen{y}$; then, in the 
  framework of this section, $\gen{x} = \gen{y}$, and so $\delta_{\grid_{k+1}} 
  (\gen{y}) - \delta_{\grid_{k+1}} (\gen{x}) = 0$.  Since the boundary of $p$ 
  contains $v_k$ (cf.\ Figure~\ref{fig:one_grid}), we see that $\Int (p) 
  \subset \Int (t_k) \cup \Int (t_{k+1})$. This implies that $x$ cannot lie 
  both to the northeast of $O_2$ and to the southwest of $O_1$.  Thus, by 
  Lemma~\ref{lem:delta}~(2)(b), $\delta_{\grid_{k+1}} (\gen{x}) - 
  \delta_{\grid_k} (\gen{x}) = +1/2$. This case is illustrated by the bottom 
  row of Figure~\ref{fig:orient_delta}.
\end{proof}

We can now prove a graded version of Theorem~\ref{thm:main}:

\begin{prop}
\label{prop:delta_main}
With respect to the $\delta$-grading, the skein exact sequence in 
Theorem~\ref{thm:main} can be written as
\begin{align*}
  \dotsb & \to \HFGh_{*-\frac{1}{2}} (L_v; R) \otimes V^{n-\ell_v} \to \HFGh_* 
  (L_+; R) \otimes V^{n-\ell_+}\\
  & \to \HFGh_{*-\frac{e}{2}}  (L_h; R) \otimes V^{n-\ell_h} \to 
  \HFGh_{*-\frac{1}{2}+1} (L_v; R) \otimes V^{n-\ell_v} \to \dotsb
\end{align*}
and
\begin{align*}
  \dotsb & \to \HFGh_{*-\frac{e}{2}} (L_h; R) \otimes V^{n-\ell_h} \to \HFGh_* 
  (L_-; R) \otimes V^{n-\ell_-}\\
  & \to \HFGh_{*+\frac{1}{2}}  (L_v; R) \otimes V^{n-\ell_v} \to 
  \HFGh_{*-\frac{e}{2}+1} (L_h; R) \otimes V^{n-\ell_h} \to \dotsb,
\end{align*}
where $R = \F{2}$ or $\Z$, $V$ is a free module of rank $2$ over $R$ with 
grading zero, and $e$ is as in the statement of Lemma~\ref{lem:sigma}.
\end{prop}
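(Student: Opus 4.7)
The plan is to upgrade the exact triangle of Theorem~\ref{thm:main} to a graded statement by computing, for each of its three constituent skein maps, the $\delta$-grading shift it induces. Specializing to $L_\infty = L_+$ (the $L_-$ case is entirely analogous), the triangle connects $\HFGh(L_+)$, $\HFGh(L_h)$, and $\HFGh(L_v)$ via maps $f_+, f_h, f_v$, and the proposition predicts the shifts $-e/2$, $(e+1)/2$, and $+1/2$ respectively; these must in particular sum to $+1$, which is the degree of the connecting map in the resulting long exact sequence. The tensor factors with $V$ have internal grading zero and require no separate tracking.

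The simplest shift to compute is that of $f_v \colon \HFGh(L_v) \to \HFGh(L_+)$. Since $L_v$ is the oriented resolution of $L_+$, the grid diagrams $G_v$ and $G_+$ can be chosen so that their $O$'s and $X$'s agree outside the skein region and are consistent at the resolved crossing itself; this is precisely the hypothesis of Lemma~\ref{lem:orient_delta}, which immediately produces a shift of $+1/2$ and so yields the $\HFGh_{*-1/2}(L_v) \to \HFGh_*(L_+)$ portion of the sequence.

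For $f_+ \colon \HFGh(L_+) \to \HFGh(L_h)$ the chosen orientation on $L_h$ generally disagrees with the orientation that the $h$-resolution inherits from $L_+$, so Lemma~\ref{lem:orient_delta} cannot be applied directly. My plan is to decompose the shift into two contributions: (a)~a marker-move contribution analysed exactly as in the proof of Lemma~\ref{lem:orient_delta} via Lemma~\ref{lem:delta}(2), contributing $+1/2$; and (b)~a correction coming from reversing the orientations of those components of $L_h$ whose inherited labelling disagrees with the chosen one, each contributing $-\epsilon/2$ by Lemma~\ref{lem:delta}(3), where $\epsilon$ tracks the net change in the count of negative crossings. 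Comparing with the setup of Lemma~\ref{lem:sigma}, these reversals cumulatively produce the requisite correction so that the total shift of $f_+$ is $-e/2$. The shift of $f_h$ is then forced to be $(e+1)/2$ by the constraint that the three shifts sum to $+1$, matching $\HFGh_{*-e/2}(L_h) \to \HFGh_{*+1/2}(L_v)$. The $L_-$ case is parallel, with the sign in Lemma~\ref{lem:sigma}(1) propagating through step (b) and the role of the ``directly compatible'' map played by $f_- \colon \HFGh(L_-) \to \HFGh(L_v)$.

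The main obstacle is the bookkeeping in step (b) for $f_+$: one must identify precisely which components of $L_h$ must be reoriented to pass from the inherited labelling to the chosen one, and verify by careful comparison with Lemma~\ref{lem:sigma} that the resulting cumulative $\epsilon$ differs from $e$ by exactly the amount needed to yield the target shift of $-e/2$. In particular one must separate the contribution of the resolved crossing $c_0$ itself from those of the other crossings, and propagate the signs consistently. Once this identification is pinned down, all remaining steps in the argument are formal.
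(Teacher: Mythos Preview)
Your overall strategy matches the paper's: compute the $\delta$-shift of the map between the two compatibly oriented diagrams directly via Lemma~\ref{lem:orient_delta}, then handle the map into the unoriented resolution by factoring through an orientation reversal and invoking Lemma~\ref{lem:delta}(3). The treatment of $f_v$ is exactly what the paper does.

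The gap is in your analysis of $f_+$. Your decomposition into ``(a) a single marker move worth $+1/2$'' and ``(b) reorient components of $L_h$'' implicitly assumes you can reorient a component of $L_+$ so that $G_+$ and $G_h$ have compatible $O$-sets, after which Lemma~\ref{lem:orient_delta} applies. That works only when the two strands meeting at $c_0$ lie on \emph{distinct} components of $L_+$; then reversing one of them makes the resolution orientation-compatible, and your (b)+(a) gives $-(e+1)/2 + 1/2 = -e/2$ as desired. When the two strands lie on the \emph{same} component of $L_+$, there is no component of $L_+$ you can reverse to achieve compatibility, and $L_h$ does not carry any orientation inherited from $L_+$ against which to run Lemma~\ref{lem:orient_delta}. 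The paper handles this case with a genuinely different three-step factorisation: a first marker move (Lemma~\ref{lem:delta}(2)) that splits the component so that the strands now lie on different components, then an orientation reversal (Lemma~\ref{lem:delta}(3)) contributing exactly $-e/2$, then a second marker move. The two marker moves together contribute $0$ or $-1$ depending on where the $\beta$-component of $\gen{x}$ sits, and this is precisely compensated by the triangle/pentagon dichotomy (the pentagon supplies an extra $+1$ from Lemma~\ref{lem:delta}(1)). Your proposal does not detect this case split, and your ``main obstacle'' paragraph, while correctly flagging the bookkeeping as delicate, does not anticipate that the decomposition itself must change shape.

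A minor point: you deduce the shift of $f_h$ from the constraint that the three shifts sum to $+1$. The paper instead computes it directly by an argument parallel to that for $f_+$. Your shortcut is legitimate once you justify the total shift, e.g.\ from the fact that going once around the triangle factors through a differential, which shifts $\delta$ by $+1$ by Lemma~\ref{lem:delta}(1); but this deserves a sentence.
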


\begin{proof}
  Suppose $L_\infty$ has a positive crossing at $c_0$, so that $L_+ = 
  L_\infty$, $L_v = L_1$ and $L_h = L_0$. Let $\grid_\infty, \grid_0, \grid_1$ 
  be grid diagrams for $L_\infty, L_0, L_1$ respectively, differing with each 
  other only at $c_0$ as in Figure~\ref{fig:grid}. Note that in 
  Figure~\ref{fig:grid} only $X$'s are used as markers, whereas in the present 
  context both $X$'s and $O$'s are used.  Then we are to prove that in the 
  exact sequence
  \[
    \dotsb \to \HKt (\grid_1) \xrightarrow{(f_1)_*} \HKt (\grid_\infty) 
    \xrightarrow{(f_\infty)_*}  \HKt (\grid_0) \xrightarrow{(f_0)_*} \HKt 
    (\grid_1) \to \dotsb,
  \]
  the map $f_1$ shifts the $\delta$-grading by $+1/2$, $f_\infty$ by $-e/2$, 
  and $f_0$ by $+(e+1)/2$.

  Refer to Figure~\ref{fig:delta1}. Since $L_1$ and $L_\infty$ have compatible 
  orientations, Lemma~\ref{lem:orient_delta} shows that the map $f_1$ shifts 
  the $\delta$-grading by $+1/2$.

  \begin{figure}
    \resizebox{0.50\textwidth}{!}{\input{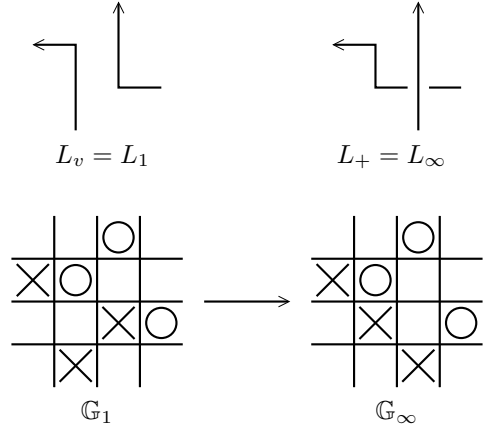}}
    \caption{A straightforward application of Lemma~\ref{lem:orient_delta} 
      gives the $\delta$-grading shift of $f_1$.}
    \label{fig:delta1}
  \end{figure}

  Let us now focus on $f_\infty$. Let $\gen{x} \in \gen{S} (\grid_\infty)$ be a 
  generator, and suppose $\gen{y} \in \gen{S} (\grid_0)$ appears in $f_\infty 
  (\gen{x})$. We proceed according to whether the two strands of the link $L_+$ 
  meeting at the crossing $c_0$ belong to different components, or to the same 
  component, of $L_+$, as follows.

  \begin{figure}
    \resizebox{0.818\textwidth}{!}{\input{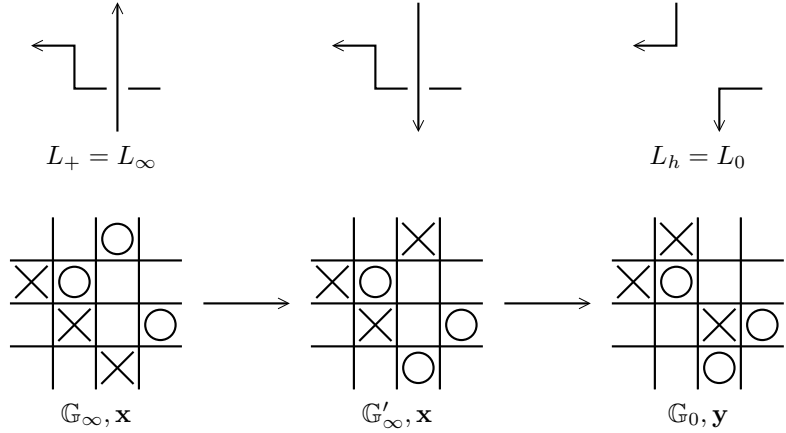}}
    \caption{Computation of the $\delta$-grading shift of $f_\infty$, when the 
      two strands meeting at $c_0$ belong to different components. The 
      generators $\gen{x}$ and $\gen{y}$ are not shown, as their positions do 
      not affect the argument.}
    \label{fig:delta2_diff}
  \end{figure}

  Suppose the strands belong to different components; see 
  Figure~\ref{fig:delta2_diff}. Starting from $\grid_\infty$, we can reverse 
  the orientation of one of the two components to obtain a new diagram 
  $\grid_\infty'$; then we can write
  \[
    \delta_{\grid_0} (\gen{y}) - \delta_{\grid_\infty} (\gen{x}) = 
    (\delta_{\grid_\infty'} (\gen{x}) - \delta_{\grid_\infty} (\gen{x})) + 
    (\delta_{\grid_0} (\gen{y}) - \delta_{\grid_\infty'} (\gen{x})).
  \]
  By Lemma~\ref{lem:delta}~(3), we have $\delta_{\grid_\infty'} (\gen{x}) - 
  \delta_{\grid_\infty} (\gen{x}) = -\epsilon/2$, where $\epsilon$ is the 
  difference between the number of negative crossings in $\grid_\infty'$ and 
  the number of such crossings in $\grid_\infty$ (which represents $L_+$). Now 
  observe that $\grid_\infty'$ and $\grid_0$ have compatible orientations; 
  also, $\grid_\infty'$ has one more negative crossing, near $c_0$, than does 
  $\grid_0$ (which represents $L_h$).  Therefore, $\epsilon = e + 1$.  
  Furthermore, the fact that $\grid_\infty'$ and $\grid_0$ have compatible 
  orientations implies that Lemma~\ref{lem:orient_delta} can be applied, so 
  $\delta_{\grid_0} (\gen{y}) - \delta_{\grid_\infty'} (\gen{x}) = +1/2$. Thus,
  \[
    \delta_{\grid_0} (\gen{y}) - \delta_{\grid_\infty} (\gen{x}) = -(e+1)/2 + 
    1/2 = -e/2.
  \]

  \begin{figure}
    \resizebox{\textwidth}{!}{\input{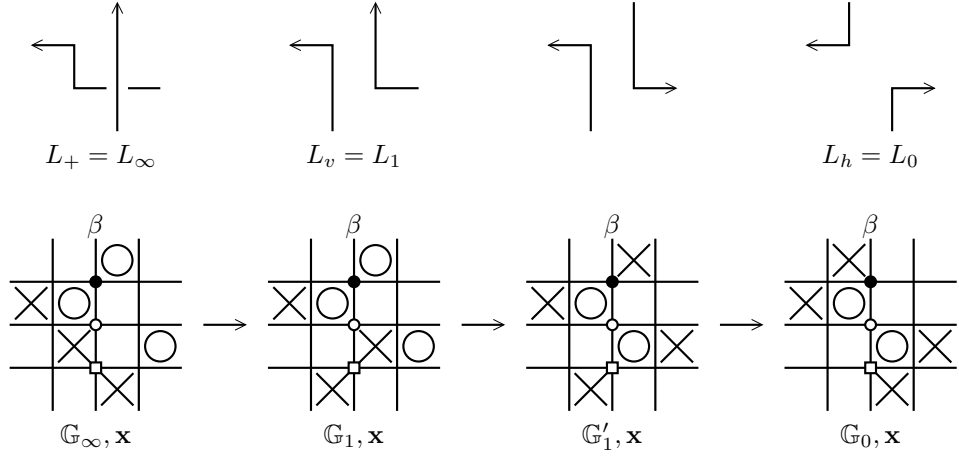}}
    \caption{Computation of the $\delta$-grading shift of $f_\infty$, when the 
      two strands meeting at $c_0$ belong to the same component. The 
      computation depends on the position of $x$, the $\beta$-component of 
      $\gen{x}$; the special cases are indicated by the solid point, the hollow 
      point, and the hollow square.}
    \label{fig:delta2_same}
  \end{figure}

  Suppose now the strands belong to the same component. As in the proof of 
  Lemma~\ref{lem:orient_delta}, we write
  \[
    \delta_{\grid_0} (\gen{y}) - \delta_{\grid_\infty} (\gen{x}) = 
    (\delta_{\grid_0} (\gen{x}) - \delta_{\grid_\infty} (\gen{x})) + 
    (\delta_{\grid_0} (\gen{y}) - \delta_{\grid_0} (\gen{x})).
  \]
  Note that this is different from the equation we use when the strands belong 
  to different components. We first compute $\delta_{\grid_0} (\gen{x}) - 
  \delta_{\grid_\infty} (\gen{x})$; see Figure~\ref{fig:delta2_same}. Since the 
  two strands in $L_+$ belong to the same component, the two strands in $L_v$ 
  must belong to different components. Therefore, we can reverse the 
  orientation of one of these components in $\grid_1$ (which represents $L_v$) 
  to obtain a new diagram $\grid_1'$; then
  \[
    \delta_{\grid_0} (\gen{x}) - \delta_{\grid_\infty} (\gen{x}) = 
    (\delta_{\grid_1} (\gen{x}) - \delta_{\grid_\infty} (\gen{x})) + 
    (\delta_{\grid_1'} (\gen{x}) - \delta_{\grid_1} (\gen{x})) + 
    (\delta_{\grid_0} (\gen{x}) - \delta_{\grid_1'} (\gen{x})).
  \]
  Let $x$ be the $\beta$-component of $\gen{x}$. We can now compute, by 
  Lemma~\ref{lem:delta}~(2),
  \[
    \delta_{\grid_1} (\gen{x}) - \delta_{\grid_\infty} (\gen{x}) =
    \begin{cases}
      +1/2 & \text{if } x \text{ lies at the hollow square in 
        Figure~\ref{fig:delta2_same};}\\
      -1/2 & \text{otherwise.}
    \end{cases}
  \]
  Now, by Lemma~\ref{lem:delta}~(3), $\delta_{\grid_1'} (\gen{x}) - 
  \delta_{\grid_1} (\gen{x}) = -\epsilon/2$, where $\epsilon$ is the difference 
  between the number of negative crossings in $\grid_1'$ and the number of such 
  crossings in $\grid_1$. Observe that $\grid_1$ and $\grid_\infty$ (which 
  represents $L_+$) have compatible orientations and hence the same number of 
  negative crossings. Similarly, $\grid_1'$ and $\grid_0$ have compatible 
  orientations and hence the same number of negative crossings. Therefore, we 
  see that $\epsilon = e$. Next, again by Lemma~\ref{lem:delta}~(2), we have
  \[
    \delta_{\grid_0} (\gen{x}) - \delta_{\grid_1'} (\gen{x}) =
    \begin{cases}
      -1/2 & \text{if } x \text{ lies at the solid point, hollow point,}\\
      & \text{\quad or hollow square in Figure~\ref{fig:delta2_same};}\\
      +1/2 & \text{otherwise.}
    \end{cases}
  \]
  Combining these calculations, we conclude that
  \[
    \delta_{\grid_0} (\gen{x}) - \delta_{\grid_\infty} (\gen{x}) =
    \begin{cases}
      -(e+2)/2 & \text{if } x \text{ lies at the solid point or hollow point}\\
      & \text{\quad in Figure~\ref{fig:delta2_same};}\\
      -e/2 & \text{otherwise.}
    \end{cases}
  \]

  We now return to computing $\delta_{\grid_0} (\gen{y}) - 
  \delta_{\grid_\infty} (\gen{x})$.
  
  Suppose there is an empty pentagon $p$ from $\gen{x}$ to $\gen{y}$; in the 
  framework described in this section, it can in fact be viewed as an empty 
  rectangle $r$ from $\gen{x}$ to $\gen{y}$ in $\grid_0$. By an argument 
  similar to that in the proof of Lemma~\ref{lem:orient_delta}, one of two 
  scenarios must be true:
  \begin{enumerate}
    \item If the point $x$ lies at the solid point or at the hollow point in 
      Figure~\ref{fig:delta2_same}, then $\Int (r) \cap (\markersO \cup 
      \markers) = \eset$.  By Lemma~\ref{lem:delta}~(1), $\delta_{\grid_0} 
      (\gen{y}) - \delta_{\grid_0} (\gen{x}) = 1$.
    \item If the point $x$ lies elsewhere, then $\Int (r) \cap (\markersO \cup 
      \markers)$ contains exactly one point. By Lemma~\ref{lem:delta}~(1), 
      $\delta_{\grid_0} (\gen{y}) - \delta_{\grid_0} (\gen{x}) = 1-1 = 0$.  
  \end{enumerate}
  Combining with our earlier calculation, we get that $\delta_{\grid_0} 
  (\gen{y}) - \delta_{\grid_\infty} (\gen{x}) = -e/2$.

  Suppose there is a triangle $p$ from $\gen{x}$ to $\gen{y}$; then, in the 
  framework of this section, $\gen{x} = \gen{y}$, and so $\delta_{\grid_0} 
  (\gen{y}) - \delta_{\grid_0} (\gen{x}) = 0$.  Since the boundary of $p$ 
  contains $v_\infty$ (cf.\ Figure~\ref{fig:one_grid}), we see that $\Int (p) 
  \subset \Int (t_\infty) \cup \Int (t_0)$. This implies that $x$ cannot lie at 
  the solid point or at the hollow point in Figure~\ref{fig:delta2_same}.  
  Therefore, by our earlier calculation, again we have $\delta_{\grid_0} 
  (\gen{y}) - \delta_{\grid_\infty} (\gen{x}) = -e/2$.

  Finally, a calculation analogous to that for $f_\infty$ can be done for 
  $f_0$, and we obtain, in this case, that the shift in the $\delta$-grading is 
  $+(e+1)/2$.

  The case when $L_\infty = L_-$ has a negative crossing at $c_0$ is similar.
\end{proof}

\begin{rmk}
  The proof above only shows that the maps $f_k$ are graded chain maps in the 
  exact sequence.  To show that $\GCt (\grid_k)$ is \emph{graded} 
  quasi-isomorphic to the mapping cone of $f_{k+1}$ (cf.\ 
  Section~\ref{sec:iterate}), we would have to show analogous statements for 
  the chain homotopies $\phi_k$ also. The proof of such statements are omitted 
  here, but are completely analogous to the proof of 
  Proposition~\ref{prop:delta_main}.
\end{rmk}

\begin{proof}[Proof of Theorem~\ref{thm:delta_sigma}]
  This is just a restatement of Proposition~\ref{prop:delta_main}, taking into 
  account the result of Lemma~\ref{lem:sigma}.
\end{proof}

\begin{proof}[Proof of Corollary~\ref{thm:thinZ}]
  By definition, every quasi-alternating link has non-zero determinant. It can 
  easily be checked that the unknot is homologically $\sigma$-thin. If $L_0$ 
  and $L_1$ are resolutions of $L_\infty$ that are quasi-alternating, then by 
  induction, $L_0$ and $L_1$ are homologically $\sigma$-thin. The exact 
  sequence in Proposition~\ref{thm:delta_sigma} collapses into short exact 
  sequences, all but one of which are zero. Thus $L_\infty$ is also 
  homologically $\sigma$-thin.
\end{proof}

\bibliography{reference}{}
\bibliographystyle{amsalpha}
%
%
%End----------------------------------------------------------------------------
\end{document}